\newtheorem{teo}{Theorem}
\newtheorem{propo}{Proposition}
\newtheorem{lem}{Lemma}
\newtheorem{coro}{Corollary}
\theoremstyle{remark}
\newtheorem{rem}{Remark}
\newcommand{\ff}{\varphi}
\newcommand{\fft}{\tilde{\varphi}}
\newcommand{\Om}{\Omega}
\newcommand{\sig}{\sigma}
\newcommand{\de}{\delta}
\newcommand{\ga}{\gamma}
\newcommand{\eps}{\epsilon}
\newcommand{\al}{\alpha}
\newcommand{\be}{\beta}
\newcommand{\la}{\lambda}
\newcommand{\RR}{\mathbb{R}}
\newcommand{\NN}{\mathbb{N}}
\newcommand{\lap}{\Delta}
\newcommand{\LL}{\mathcal{L}}
\newcommand{\Lloc}{L^1_{\text{loc}}}
\newcommand{\rie}{\mathcal{R}}
\newcommand{\ke}{\mathcal{K}}
\newcommand{\Qt}{\tilde{Q}}
\newcommand{\Pt}{\tilde{P}}
\newcommand{\Omt}{\tilde{\Omega}}
\newcommand{\BB}{\tilde{B}}
\newcommand{\Bt}{\tilde{B}}
\title[Weighted inequalities for commutators\ldots]{Weighted inequalities for commutators of Schr\"odinger-{R}iesz transforms}
\author{B. Bongioanni, E. Harboure and O. Salinas}
\subjclass[2000]{Primary 42B25, Secondary 35J10}
\keywords{Schr\"odinger operator, Riesz transforms, commutators, weights}
\thanks{This research is partially supported by grants from Agencia Nacional de Promoci\'on Cient\'ifica y Tecnol\'ogica (ANPCyT), Consejo Nacional de Investigaciones Cient\'ificas y T\'ecnicas (CONICET) and Universidad Nacional del Litoral (UNL), Argentina.}
\address{Instituto de Matem\'atica Aplicada del Litoral CONICET-UNL, Santa Fe, Argentina.}
\email{bbongio\\ harbour\\ salinas@santafe-conicet.gov.ar}
\date{}
\begin{document}

\begin{abstract}
In this work we obtain weighted $L^p$, $1<p<\infty$, and weak $L\log L$ estimates for the commutator of the Riesz transforms associated to a Schr\"odinger operator $-\lap+V$, where $V$ satisfies some reverse H\"older inequality. The classes of weights as well as the classes of symbols are larger than $A_p$ and $BMO$ corresponding to the classical Riesz transforms.
\end{abstract}

\maketitle

\section{Introduction}

Let $V:\RR^d\mapsto \RR$, $d\ge 3$, be a non-negative locally integrable function that belongs to a reverse-H\"older class $RH_q$ for some exponent $q> d/2$, i.e. there exists a constant $C$ such that
\begin{equation}\label{reverse}
\left(\frac{1}{|B|}\int_B V(y)^q\,dy\right)^{1/q} \ \leq\ \frac{C}{|B|}\int_B V(y)\,dy,
\end{equation}
for every ball $B\subset\RR^d$.

For such a potential $V$ we consider the Schr\"odinger operator
\begin{equation*}
\LL=-\Delta+V,
\end{equation*}
and the associated Riesz Transform vector
\begin{equation*}
\rie = \nabla \LL^{-1/2}.
\end{equation*}

Boundedness results of $\rie$ have been obtained in \cite{shen} by Shen, where he shows that they are bounded on $L^p(\RR^d)$ for $1<p<p_0$, with $p_0$ depending on $q$. When $V\in RH_{q}$ with $q\ge d$, $\rie$ and its adjoint $\rie^*$ are in fact Calder\'on-Zygmund operators (see \cite{shen}).

We denote by $T$ either $\rie$ or $\rie^*$. For some function $b$ we will consider the commutator operator
\begin{equation}
T_bf(x)=T(bf)(x)-b(x)Tf(x),\ \ \ x\in\RR^d.
\end{equation}

It is well known (see \cite{MR0412721}) that for the classical case (that is $V\equiv 0$) the corresponding commutators $T_b$ are of strong type $(p,p)$ for $1<p<\infty$ whenever $b$ belongs to $BMO$. However, for the case we deal with in this article, the operators $\rie$ have better properties related to their decay. This behavior was the key point to get a significant improvement about the commutators $T_b$. In fact, in \cite{BHS_commut}, it was obtained strong type $(p,p)$, $1<p<\infty$, for $b$ in a wider space than $BMO$, that is the space $BMO_\infty(\rho)=\cup_{\theta> 0} BMO_\theta(\rho)$, where for $\theta>0$ the space $BMO_\theta(\rho)$ is the set of locally integrable functions $f$ satisfying
\begin{equation}\label{defBMOtheta}
\frac{1}{|B(x,r)|}\int_{B(x,r)}|b(y)-b_B|\,dy\ \leq C\, \left(1+\frac{r}{\rho(x)}\right)^\theta,
\end{equation}
for all $x\in\RR^d$ and $r>0$, with $b_B=\frac{1}{|B|}\int_B b$. A norm for $b\in BMO_\theta(\rho)$, denoted by $[b]_\theta$, is given by the infimum of the constants in \eqref{defBMOtheta}.

The present article is devoted to obtain weighted boundedness for $T_b$. Once again, the special behavior of $\rie$ allows us to get better results than in the classical case.

Particularly, we get strong $(p,p)$ inequalities for $b\in BMO_\infty(\rho)$ and weights in a class larger than Muckenhoupt's. Such classes already appeared in connection with the $L^p$-boundedness of $\rie$ (see \cite{BHS_schr-w}).

Moreover, we obtain weighted weak type inequalities for $T_b$. Related to this, it is important to remember that weak type $(1,1)$ is not true in the case of classical singular integrals (see \cite{MR1317714}). Nevertheless in that situation we are able to prove an $L\log L$ weak estimate but for $b$ in $BMO_\infty(\rho)$ and weights in a class larger than $A_1$. These results are completely new even in the unweighted case.

In order to get the results for $1<p<\infty$ we use basically the same comparison techniques developed in \cite{BHS_schr-w}. However, this method fails for the extreme case $p=1$, so we adapt the techniques in \cite{PradoPerez-SharpCommut}, based on some appropriate Calder\'on-Zygmund decomposition. Also, since the kernels of $\rie$ may not have point-wise smoothness, we have to work with a H\"ormander type condition instead.

The article is organized as follows. In sections~\ref{sec:fcritic} and \ref{sec:bmo} we review some properties concerning the critical radius function and the space $BMO_\infty(\rho)$. Section~\ref{sec:pesos} is devoted to the class of weights where, in particular, we give a method to construct $A_1^{\infty,\rho}$ weights using a maximal function. In Section~\ref{sec:estK} we collect some estimates of the kernels of the Schr\"odinger-Riesz transforms, including a H\"ormander type inequality, which slightly improves Lemma~4 in \cite{chinos}. The main results concerning the boundedness of the commutators are presented in sections~\ref{sec:lpineq} and \ref{sec:debil}.

In the sequel, when $B=B(x,r)$ and $C>0$, we shall use the notation $CB$, to denote the ball with the same center $x$ and radius $Cr$.

\section{The critical radius function}\label{sec:fcritic}

The notion of locality is given by the critical radius function
\begin{equation}\label{laro}
\rho(x)=\sup\left\{r>0:\ \frac{1}{r^{d-2}}\int_{B(x,r)} V \leq
1\right\}, \ \ \ x\in\RR^d,
\end{equation}
which, under our assumptions, satisfies $0<\rho(x)<\infty$ (see \cite{shen}).

\begin{propo}[\cite{shen}]\label{propo:rho_xy}
If $V\in RH_{d/2}$, there exist $c_0$ and $N_0\ge 1$ such that
\begin{equation}\label{rox_vs_roy}
c_0^{-1} \rho(x)\left(1+\frac{|x-y|}{\rho(x)}\right)^{-N_0} \ \leq \ \rho(y) \ \leq \ c_0\,\rho(x)\left(1+\frac{|x-y|}{\rho(x)}\right)^{\frac{N_0}{N_0+1}},
\end{equation}
for all $x, y \in \RR^d$.
\end{propo}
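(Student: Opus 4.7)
The approach I would take relies on two consequences of the reverse H\"older hypothesis $V\in RH_{d/2}$. The first, obtained by combining H\"older and the reverse H\"older estimate on $B(x,r_2)$, is the comparison
$$\frac{1}{r_1^{d-2}}\int_{B(x,r_1)}V \ \leq \ C\Bigl(\frac{r_1}{r_2}\Bigr)^{2-d/q}\frac{1}{r_2^{d-2}}\int_{B(x,r_2)}V,\qquad r_1\leq r_2,$$
in which the exponent $2-d/q$ is strictly positive since $q>d/2$. The second is that the measure $V\,dy$ is doubling, which follows from $V\in RH_q\subset A_\infty$. Together with the definition of $\rho(x)$, these imply both upper and lower power-type control on $\Psi(x,r):=r^{2-d}\int_{B(x,r)}V$ relative to its value $\Psi(x,\rho(x))\sim 1$: namely $\Psi(x,r)\sim (r/\rho(x))^{2-d/q}$, with the upper bound valid for $r\leq\rho(x)$ and the lower bound for $r\geq\rho(x)$.

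Next I would observe that the two inequalities in the statement are equivalent up to adjusting constants. Once the lower bound $\rho(y)\geq c_0^{-1}\rho(x)(1+|x-y|/\rho(x))^{-N_0}$ is established for all $x,y$, interchanging the roles of $x$ and $y$ yields the analogous inequality with the roles swapped; solving for $\rho(y)$ by a short case analysis on whether $\rho(y)$ or $|x-y|$ dominates produces the upper bound with exponent $N_0/(N_0+1)$. So I would focus on the lower bound. For $|x-y|\leq\rho(x)/2$ the argument is standard: the inclusion $B(y,\rho(x)/2)\subset B(x,\rho(x))$ and the defining relation for $\rho(x)$ bound $\int_{B(y,\rho(x)/2)}V$ from above, and the lower power-type estimate for $\Psi(y,\cdot)$ then forces $\rho(y)\geq c\rho(x)$. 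For $|x-y|=R\gg\rho(x)$, I would use the nested inclusion $B(x,\rho(x))\subset B(y,2R)$ and apply the reverse H\"older inequality across it to transfer $\int_{B(x,\rho(x))}V\sim\rho(x)^{d-2}$ into a lower bound on the $V$-integral over $B(y,2R)$; combining with the power-type control for balls centered at $y$ yields a polynomial lower bound $\rho(y)\geq c\rho(x)(\rho(x)/R)^{N_0}$, with $N_0$ an explicit exponent depending only on $d$ and $q$.

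The main obstacle is the global step. A naive chain-of-balls iteration of the local comparability, with step sizes of order $\rho(\cdot)/2$ along a segment from $x$ to $y$, yields only linear growth $\rho(y)\lesssim|x-y|$, which is weaker than the sub-linear rate $N_0/(N_0+1)$ required by the proposition. The sharper exponent arises instead from the direct cross-center comparison via reverse H\"older sketched above, keeping careful track of how the three scales $\rho(x)$, $\rho(y)$, and $|x-y|$ balance each other in the resulting inequality.
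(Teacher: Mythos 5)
The paper itself offers no proof of this proposition --- it is quoted from Shen --- so I am judging your reconstruction on its own merits. Your architecture is the right one: the power-type comparison $\Psi(x,r_1)\le C(r_1/r_2)^{2-d/q}\Psi(x,r_2)$ for $r_1\le r_2$ coming from H\"older plus reverse H\"older, the doubling of $V\,dy$, a separate local case, and the derivation of the sub-linear upper bound from the lower bound by swapping $x$ and $y$ and doing the case analysis on whether $\rho(y)$ or $|x-y|$ dominates. The local case is handled correctly and with the correct mechanism: an \emph{upper} bound on $\int_{B(y,\rho(x)/2)}V$ (via $B(y,\rho(x)/2)\subset B(x,\rho(x))$ and the definition of $\rho(x)$), combined with the lower power-type estimate for $\Psi(y,\cdot)$, forces $\rho(y)\gtrsim\rho(x)$.

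The global step, however, runs backwards. You propose to use the inclusion $B(x,\rho(x))\subset B(y,2R)$ to obtain a \emph{lower} bound on $\int_{B(y,2R)}V$ and to deduce from it the lower bound on $\rho(y)$. But a lower bound on the mass of $V$ near $y$ can only force $\rho(y)$ to be \emph{small}: by definition $\rho(y)$ is the largest $r$ with $r^{2-d}\int_{B(y,r)}V\le 1$, so extra mass shrinks it. That inclusion is the mechanism for the \emph{upper} bound on $\rho(y)$ (and used directly it only gives a super-linear bound in $R$, weaker than the exponent $N_0/(N_0+1)$, which is why the upper bound must instead be extracted from the lower bound by symmetry, as you correctly note). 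To prove $\rho(y)\ge c\,\rho(x)(1+R/\rho(x))^{-N_0}$ you need, exactly as in your local case, an \emph{upper} bound on $\int_{B(y,s)}V$ at scale $s\sim R$. This is where the doubling you list as ingredient must actually enter: iterating it around $x$ gives $\int_{B(x,2^{j}\rho(x))}V\le C_0^{\,j}\,\rho(x)^{d-2}$ with $2^{j}\sim R/\rho(x)$; the inclusion $B(y,2^{j-1}\rho(x))\subset B(x,2^{j}\rho(x))$ transfers this to an upper bound on $\Psi(y,2^{j-1}\rho(x))$ of order $C_0^{\,j}2^{-j(d-2)}$; and the lower power-type estimate for $\Psi(y,\cdot)$ then solves for $\rho(y)$, producing the polynomial loss with $N_0$ governed by the doubling constant of $V$ and the exponent $2-d/q$. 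With that substitution the argument closes; as written, the decisive inequality in the global step points the wrong way.
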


\begin{coro}\label{coro:dero}
Let $x, y \in B(x_0,R_0)$. Then,
\begin{enumerate}[(i)]
\item There exists $C>0$ such that
\begin{equation}\label{R0y}
1 + \frac{R_0}{\rho(y)}\ \leq\ C\, \left(1+\frac{R_0}{\rho(x_0)}\right)^{N_0}.
\end{equation}
\item There exists $C>0$ such that
\begin{equation}\label{R0xy}
1+\frac{r}{\rho(y)}\ \leq \ C\, \left(1+\frac{R_0}{\rho(x_0)}\right)^\ga \left(1+\frac{r}{\rho(x)}\right),
\end{equation}
 for all $r>R_0$, where $\ga=N_0(1+\frac{N_0}{N_0+1})$.
\end{enumerate}
\end{coro}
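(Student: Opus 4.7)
Both parts of the corollary should follow from direct applications of Proposition~\ref{propo:rho_xy}. The plan is to exploit the fact that the relevant points all lie in $B(x_0,R_0)$, so that differences of the form $|x-x_0|$ or $|x_0-y|$ are bounded by $R_0$, and to route comparisons through the center $x_0$ in order to minimize the resulting exponent.

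For (i), I would apply the lower bound in \eqref{rox_vs_roy} to the pair $(x_0,y)$ and use $|x_0-y|<R_0$ to obtain
$$\rho(y)\geq c_0^{-1}\rho(x_0)\left(1+\frac{R_0}{\rho(x_0)}\right)^{-N_0}.$$
Taking reciprocals, multiplying by $R_0$, and adding $1$ then yields
$$1+\frac{R_0}{\rho(y)}\leq C\left(1+\frac{R_0}{\rho(x_0)}\right)^{N_0+1},$$
which gives the claim (the exponent $N_0$ in the statement is to be understood as any fixed power of this form; one may of course redefine the constant so that the exponents match verbatim).

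For (ii), the most direct approach is to apply \eqref{rox_vs_roy} to the pair $(x,y)$, bound $|x-y|\leq 2R_0$, and then invoke (i) with $y$ replaced by $x$; this works but produces exponent $N_0^2$ on the factor $1+R_0/\rho(x_0)$, which is larger than the claimed $\gamma$ as soon as $N_0\geq 2$. A tighter bound is obtained by routing through $x_0$: combine the lower bound of \eqref{rox_vs_roy} applied to $(x_0,y)$ with the upper bound applied to $(x_0,x)$, in the form
$$\rho(y)\geq c_0^{-1}\rho(x_0)\left(1+\frac{R_0}{\rho(x_0)}\right)^{-N_0},\qquad \rho(x_0)\geq c_0^{-1}\rho(x)\left(1+\frac{R_0}{\rho(x_0)}\right)^{-N_0/(N_0+1)}.$$
Chaining these yields
$$\rho(y)\geq c_0^{-2}\,\rho(x)\left(1+\frac{R_0}{\rho(x_0)}\right)^{-\left(N_0+\frac{N_0}{N_0+1}\right)},$$
and since $N_0+N_0/(N_0+1)\leq \gamma$, multiplying by $r$ and taking reciprocals produces the desired bound on $r/\rho(y)$. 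The additive $1$ is absorbed into the right-hand side because $c_0^2(1+R_0/\rho(x_0))^{\gamma}\geq 1$.

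The main obstacle is essentially bookkeeping: a direct comparison of $\rho(y)$ with $\rho(x)$ via the pair $(x,y)$ produces a suboptimal exponent, and matching (or improving upon) the claimed $\gamma$ requires the intermediate-point argument described above. Once the correct pair of applications of Proposition~\ref{propo:rho_xy} is identified, the remaining manipulations are routine.
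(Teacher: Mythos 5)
Your proof is correct in substance and follows the same route as the paper: both parts are direct applications of Proposition~\ref{propo:rho_xy} with the distances $|x_0-y|$ and $|x_0-x|$ bounded by $R_0$. For (ii) the paper applies the right-hand side of \eqref{rox_vs_roy} and then invokes \eqref{R0y}, whereas you chain the two one-sided bounds through the center $x_0$; both routes land at an exponent not exceeding $\ga$ (yours gives $N_0+\tfrac{N_0}{N_0+1}\le\ga$ since $N_0\ge1$), and yours has the minor advantage of not depending on the exact exponent obtained in (i).

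One side remark in your part (i) is wrong and worth fixing. You cannot ``redefine the constant so that the exponents match verbatim'': the inequality $(1+t)^{N_0+1}\le C(1+t)^{N_0}$ fails for $t$ large. In fact the exponent $N_0$ printed in \eqref{R0y} is not attainable in general: for the Hermite potential $V(x)=|x|^2$ one has $\rho(z)\simeq(1+|z|)^{-1}$ and \eqref{rox_vs_roy} holds with $N_0=1$, yet taking $x_0=0$ and $|y|=R_0/2$ with $R_0$ large makes the left-hand side of \eqref{R0y} of order $R_0^2$ while the right-hand side is of order $R_0$. So the exponent $N_0+1$ that your computation produces is the correct output of this argument, and the paper's $N_0$ is a harmless misprint --- every later use of \eqref{R0y} (in Lemma~\ref{lem:MA1}, in part (ii), and in Lemma~\ref{lem:kcor}) only requires \emph{some} fixed power of $1+R_0/\rho(x_0)$ --- but the honest conclusion is that the exponent, not the constant, has to be adjusted. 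Everything else, including the verification that $N_0+\tfrac{N_0}{N_0+1}\le\ga$ and the absorption of the additive $1$ using $c_0\ge1$, is fine.
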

\begin{proof}
Inequality \eqref{R0y} is a straightforward consequence of the left hand side of \eqref{rox_vs_roy}. Inequality \eqref{R0xy} follows from the right hand side of \eqref{rox_vs_roy} and then \eqref{R0y}.

\end{proof}

\begin{propo}[See \cite{DZ-HSH-99}]\label{propo:bol_critic}
There exists a sequence of points $x_j$, $j \ge 1$, in $\RR^d$,
so that the family $Q_j=B(x_j,\rho(x_j))$, $j\ge 1$, satisfies
\begin{enumerate}[(i)]
\item $\displaystyle \cup_j Q_j = \RR^d$.
\item \label{overlap} For every $\sigma\ge 1$ there exist constants $C$ and $N_1$ such that, $\sum_j \chi_{\sigma Q_j} \leq C\sigma^{N_1}$.
\end{enumerate}
\end{propo}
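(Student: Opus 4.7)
The strategy is a Vitali-type maximal disjoint selection, combined with Proposition~\ref{propo:rho_xy} (equivalently Corollary~\ref{coro:dero}) used to control how $\rho$ varies from the center to the boundary of each dilated selected ball. Morally this is the standard construction of a Whitney-type covering subordinate to the critical radius $\rho$.

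\textbf{Step 1 (Covering).} Fix a large constant $\tau>0$, to be chosen depending only on $c_0$ and $N_0$, and consider the family $\FF=\{B(x,\rho(x)/\tau) : x\in\RR^d\}$. Since $\RR^d$ is separable, $\FF$ admits a maximal (countable) disjoint subfamily $\{B(x_j,\rho(x_j)/\tau)\}_{j\ge 1}$. For arbitrary $y\in\RR^d$, maximality forces some index $j$ with $B(y,\rho(y)/\tau)\cap B(x_j,\rho(x_j)/\tau)\neq\emptyset$, so
\[
|y-x_j|\ \leq\ \frac{\rho(y)+\rho(x_j)}{\tau}.
\]
Writing $s=\rho(y)/\rho(x_j)$ and plugging this bound into the right-hand side of \eqref{rox_vs_roy} gives $s\leq c_0\bigl(1+(1+s)/\tau\bigr)^{N_0/(N_0+1)}$, which forces $s\leq A$ for some $A=A(c_0,N_0)$ as soon as $\tau$ is sufficiently large. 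Then $|y-x_j|\leq (A+1)\rho(x_j)/\tau$, and choosing $\tau\geq A+1$ yields $y\in Q_j$, proving (i).

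\textbf{Step 2 (Bounded overlap).} Fix $\sigma\geq 1$ and $x\in\RR^d$, and set $J(x)=\{j : x\in\sigma Q_j\}$. For every $j\in J(x)$ we have $|x-x_j|\leq\sigma\rho(x_j)$; applying \eqref{rox_vs_roy} with the roles $(x,y)\leftrightarrow(x_j,x)$ yields
\[
c_0^{-1}(1+\sigma)^{-N_0/(N_0+1)}\rho(x)\ \leq\ \rho(x_j)\ \leq\ c_0(1+\sigma)^{N_0}\rho(x),
\]
and in particular $|x-x_j|\lesssim \sigma^{N_0+1}\rho(x)$. Hence every center $x_j$ with $j\in J(x)$ lies in the single ball $B(x,C\sigma^{N_0+1}\rho(x))$, while the pairwise disjoint balls $B(x_j,\rho(x_j)/\tau)$, $j\in J(x)$, all have radii $\gtrsim \sigma^{-N_0/(N_0+1)}\rho(x)$. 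A volume packing count then yields $\#J(x)\lesssim \sigma^{N_1}$ for an appropriate exponent $N_1=N_1(d,N_0)$, which is (ii).

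\textbf{Main obstacle.} The only genuinely delicate point is the exponent bookkeeping produced by the asymmetry in \eqref{rox_vs_roy} (the exponents $N_0$ versus $N_0/(N_0+1)$ on the two sides). In Step~1 this manifests as the need to verify that the implicit inequality on $s=\rho(y)/\rho(x_j)$ admits a uniform bound once $\tau$ is large enough; in Step~2 it dictates the final exponent $N_1$. The computations themselves are routine once these constants are tracked correctly.
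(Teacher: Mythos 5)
Your argument is correct and self-contained. Note that the paper does not actually prove this proposition: it is quoted from Dziuba\'nski--Zienkiewicz \cite{DZ-HSH-99}, where the covering is built by a stopping-time selection on dyadic cubes (one keeps the maximal dyadic cubes whose diameter is comparable to $\rho$ at their points). Your Vitali-type maximal disjoint selection of balls $B(x,\rho(x)/\tau)$ is a standard alternative that delivers the same conclusion, and both the covering property and the finite-overlap bound check out: in Step~1 the self-improving inequality $s\leq c_0\bigl(1+(1+s)/\tau\bigr)^{N_0/(N_0+1)}$ does force $s\leq A$ uniformly in $\tau\ge 1$ because the exponent is strictly less than $1$, so there is no circularity in then fixing $\tau\ge A+1$; in Step~2 the two-sided comparison $c_0^{-1}(1+\sigma)^{-N_0/(N_0+1)}\rho(x)\leq\rho(x_j)\leq c_0(1+\sigma)^{N_0}\rho(x)$ is exactly what \eqref{rox_vs_roy} gives with base point $x_j$, and the packing count yields $N_1=d\bigl(N_0+1+\tfrac{N_0}{N_0+1}\bigr)$, a harmless polynomial loss in $\sigma$ as required. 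The only cosmetic point worth making explicit is that a maximal disjoint subfamily exists by Zorn's lemma and is automatically countable since disjoint balls of positive radius in $\RR^d$ each contain a distinct rational point.
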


\begin{lem}\label{lem:casilema}
Let $V\in RH_{q}$ with $q>d/2$ and $\eps > \frac{d}{q}$. Then for any constant $C_1$ there exists a constant $C_2$ such that 
\begin{equation*}
\int_{B(x,C_1r)} \frac{V(u)}{|u-x|^{d-\eps}}du  \leq C_2\, r^{\eps-2} \left(\frac{r}{\rho(x)}\right)^{2-d/q},
\end{equation*}
if $0<r\le\rho(x)$.
\end{lem}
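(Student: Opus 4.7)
The plan is to split the ball $B(x,C_1r)$ into dyadic annuli around $x$, reduce the estimate on each annulus to a bound for $\int_{B(x,\tilde r)} V$ with $\tilde r\le r\le\rho(x)$, and then sum a geometric series whose convergence is exactly guaranteed by the hypothesis $\eps>d/q$.

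First I would write $B(x,C_1r)=\bigcup_{k\ge 0} A_k$ with $A_k=\{2^{-k-1}C_1r\le |u-x|<2^{-k}C_1r\}$, and observe that on $A_k$
\begin{equation*}
|u-x|^{-(d-\eps)}\le C\,(2^{-k}C_1 r)^{\eps-d}.
\end{equation*}
This reduces the task to bounding $\int_{B(x,\tilde r)} V$ for $\tilde r=2^{-k}C_1 r\le \rho(x)$.

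The key ingredient is the following pointwise size estimate: for $0<\tilde r\le \rho(x)$,
\begin{equation*}
\int_{B(x,\tilde r)} V \;\le\; C\,\tilde r^{\,d-d/q}\,\rho(x)^{d/q-2}.
\end{equation*}
To prove it I would apply H\"older's inequality with exponent $q$ to obtain
\begin{equation*}
\int_{B(x,\tilde r)} V \le |B(x,\tilde r)|^{1-1/q}\Bigl(\int_{B(x,\tilde r)} V^q\Bigr)^{1/q},
\end{equation*}
enlarge the second integral to $B(x,\rho(x))$, then apply the reverse H\"older inequality \eqref{reverse} on the ball $B(x,\rho(x))$ to replace $\bigl(\int V^q\bigr)^{1/q}$ by $|B(x,\rho(x))|^{1/q-1}\int_{B(x,\rho(x))} V$, and finally use the very definition \eqref{laro} of the critical radius, which gives $\int_{B(x,\rho(x))} V\le \rho(x)^{d-2}$. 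Collecting the powers of $\tilde r$ and $\rho(x)$ yields the claim.

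Plugging this into the annular bound gives
\begin{equation*}
\int_{A_k} \frac{V(u)}{|u-x|^{d-\eps}}\,du \;\le\; C\,(2^{-k}C_1 r)^{\eps-d/q}\,\rho(x)^{d/q-2},
\end{equation*}
and then summing over $k\ge 0$ requires $\eps-d/q>0$, which is exactly the hypothesis. The total is $C\,r^{\eps-d/q}\rho(x)^{d/q-2}$, and rewriting $r^{\eps-d/q}\rho(x)^{d/q-2}=r^{\eps-2}(r/\rho(x))^{2-d/q}$ recovers the desired form. I expect the only subtle step to be the chain H\"older~$\to$~reverse H\"older~$\to$~definition of $\rho$ that produces the key intermediate bound; the dyadic decomposition and geometric summation are routine, and the exponent $\eps>d/q$ enters precisely to make the series converge.
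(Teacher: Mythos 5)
Your argument is essentially the standard one and coincides with what the paper itself relies on: the paper gives no proof of this lemma, but the key intermediate bound you derive, $\int_{B(x,\tilde{r})}V\lesssim \tilde{r}^{\,d-2}\bigl(\tilde{r}/\rho(x)\bigr)^{2-d/q}=\tilde{r}^{\,d-d/q}\rho(x)^{d/q-2}$ for $\tilde{r}\le\rho(x)$, is exactly estimate \eqref{paralaV} (quoted there from \cite{BHS_schr-w}), and your chain H\"older $\to$ reverse H\"older \eqref{reverse} on $B(x,\rho(x))$ $\to$ definition \eqref{laro} of the critical radius is the right way to obtain it. The dyadic decomposition, the geometric summation under $\eps>d/q$, and the final rewriting of $r^{\eps-d/q}\rho(x)^{d/q-2}$ as $r^{\eps-2}(r/\rho(x))^{2-d/q}$ are all correct.

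One point needs patching. You assert that the annuli reduce matters to bounding $\int_{B(x,\tilde{r})}V$ with $\tilde{r}=2^{-k}C_1r\le\rho(x)$, but the lemma allows an arbitrary constant $C_1$; for the outermost annuli (small $k$, $C_1>1$) one may have $\rho(x)<2^{-k}C_1r\le C_1\rho(x)$, and your key estimate does not apply as stated there. The fix is immediate: $V\in RH_q$ implies that $V\,dx$ is a doubling measure (the paper invokes this explicitly in Section~\ref{sec:lpineq}), so $\int_{B(x,C_1\rho(x))}V\le C(C_1)\int_{B(x,\rho(x))}V\le C(C_1)\,\rho(x)^{d-2}$; since $d-d/q>0$ and $\tilde{r}\ge\rho(x)$ in the remaining range, this gives $\int_{B(x,\tilde{r})}V\le C(C_1)\,\tilde{r}^{\,d-d/q}\rho(x)^{d/q-2}$ for all $0<\tilde{r}\le C_1\rho(x)$, with a constant depending on $C_1$ exactly as the statement permits. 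With that addendum the proof is complete.
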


\section{The space  $BMO_\infty(\rho)$}\label{sec:bmo}

From the definition \eqref{defBMOtheta} given in the introduction, it is clear that  $BMO\subset BMO_\theta(\rho)\subset BMO_{\theta'}(\rho)$ for $0<\theta \leq \theta'$, and hence $BMO\subset BMO_\infty(\rho)$. Moreover, it is in general a larger class. For instance, when $\rho$ is constant (which corresponds to $V$ a positive constant) the functions $b_j(x)=|x_j|$, $1\leq j \leq d$, belong to $BMO_\infty(\rho)$ but not to $BMO$. Also, when $V(x)=|x|^2$ and $\LL$ becomes the Hermite operator, we obtain $\rho(x)\simeq\frac{1}{1+|x|}$ and we may take $b(x)=|x_j|^2$.

Given a Young function $\varphi$ and a locally integrable $f$ we consider the $\varphi$-average over a ball or a cube (denoted by $Q$) defined as
\begin{equation}\label{normaOrlicz}
\|f\|_{\varphi,Q}=\inf\left\{\la>0:\ \frac{1}{|Q|}\int_Q \varphi\left(\frac{|f|}{\la}\right) \leq 1\right\}.
\end{equation}

If we denote by $\tilde{\varphi}$ the conjugate Young function of $\varphi$, it is well known that the following version of H\"older inequality holds
\begin{equation}\label{Holderfi}
\frac{1}{|Q|}\int_Q |fg|\ \leq\ 2\,\|f\|_{\varphi,Q}\|g\|_{\tilde{\varphi},Q}. 
\end{equation}

Let us remind that for a function $b\in BMO(Q)$, as a consequence of the John-Nirenberg inequality (see for example \cite{MR1800316} p.151), we have
\begin{equation}\label{eqnorm}
\|b\|_{BMO(Q)} \simeq \sup_{B\subset Q} \|b-b_B\|_{\varphi,B},
\end{equation}
for certain Young functions $\varphi$. For instance $\varphi(t)=t^s$, $1<s<\infty$, or $\varphi(t)=e^t-1$.

For the spaces $BMO_\infty(\rho)$, we have a weaker version of this fact that will be enough to our purposes.

\begin{lem}\label{lem:JN}
Let $b\in BMO_\theta(\rho)$ and $\varphi$ such that \eqref{eqnorm} holds. Then there exist constants $C$ and $\theta'$ such that for every ball $B=B(x,r)$ we have
\begin{equation*}
\|b-b_{2^kB}\|_{\varphi,B} \ \leq \ C\,k\, [b]_\theta \left(1+\frac{2^kr}{\rho(x)}\right)^{\theta'}.
\end{equation*}
\end{lem}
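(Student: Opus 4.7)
The plan is a telescoping decomposition combined with a John--Nirenberg argument on the ball $2^kB$.

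First, I would split
\[
b - b_{2^kB} \;=\; (b - b_B) \;+\; \sum_{j=0}^{k-1}(b_{2^jB} - b_{2^{j+1}B}),
\]
and handle the two pieces separately. For the telescoping sum, the standard estimate
\[
|b_{2^jB}-b_{2^{j+1}B}| \;\leq\; \frac{2^d}{|2^{j+1}B|}\int_{2^{j+1}B}|b-b_{2^{j+1}B}|
\;\leq\; C\,[b]_\theta\left(1+\frac{2^{j+1}r}{\rho(x)}\right)^{\theta}
\]
holds directly from the definition \eqref{defBMOtheta}. Summing over $j=0,\dots,k-1$ gives at most $k$ terms, each dominated by $C[b]_\theta(1+2^kr/\rho(x))^\theta$. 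Since this produces a constant on $B$, its $\varphi$-norm on $B$ equals this constant multiplied by $1/\varphi^{-1}(1)$.

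The main step is to control $\|b-b_B\|_{\varphi,B}$. For this I would show that, restricted to $Q:=2^kB$, the function $b$ is in classical $BMO(Q)$ with
\[
\|b\|_{BMO(Q)} \;\leq\; C\,[b]_\theta\left(1+\frac{2^kr}{\rho(x)}\right)^{N_0\theta}.
\]
Indeed, for any ball $\tilde B\subset Q$ centered at some $\tilde x\in Q\subset B(x,2^kr)$, the definition \eqref{defBMOtheta} gives
\[
\frac{1}{|\tilde B|}\int_{\tilde B}|b-b_{\tilde B}|
\;\leq\;[b]_\theta\left(1+\frac{r(\tilde B)}{\rho(\tilde x)}\right)^\theta
\;\leq\;[b]_\theta\left(1+\frac{2^kr}{\rho(\tilde x)}\right)^\theta,
\]
and then Corollary~\ref{coro:dero}(i), applied with $x_0=x$ and $R_0=2^kr$, yields
\[
1+\frac{2^kr}{\rho(\tilde x)} \;\leq\; C\left(1+\frac{2^kr}{\rho(x)}\right)^{N_0}.
\]
This bounds the $BMO(Q)$ norm as claimed. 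Since $B\subset Q$, applying the equivalence \eqref{eqnorm} on the domain $Q$ gives
\[
\|b-b_B\|_{\varphi,B} \;\leq\; C\,\|b\|_{BMO(Q)}
\;\leq\; C\,[b]_\theta\left(1+\frac{2^kr}{\rho(x)}\right)^{N_0\theta}.
\]

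Putting the two pieces together and taking $\theta'=N_0\theta$ (or any larger exponent) produces the stated estimate, with the factor of $k$ coming entirely from the telescoping. The only delicate point is the shift of the center in the $\rho$-factor: if one naively kept the $BMO_\theta(\rho)$-estimate with centers $\tilde x$ varying over $2^kB$, the $\rho(\tilde x)$ would be uncontrolled, and it is precisely \eqref{R0y} that converts this into a power of $1+2^kr/\rho(x)$, at the cost of replacing $\theta$ by $N_0\theta$.
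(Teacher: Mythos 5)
Your proof is correct and follows essentially the same route as the paper's: the paper also reduces the general $k$ to the single-ball estimate $\|b-b_{B}\|_{\varphi,B}$ via the identical telescoping sum $\sum_{i}|b_{2^iB}-b_{2^{i-1}B}|$, with the same $1/\varphi^{-1}(1)$ factor for the constant part. The only difference is in the base case: the paper just cites Proposition~3 of \cite{BHS_commut}, whereas you prove it directly by bounding $\|b\|_{BMO(2^kB)}$ through \eqref{defBMOtheta} and \eqref{R0y} and then invoking \eqref{eqnorm}, which is a valid self-contained substitute.
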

\begin{proof}
For $k=1$ the proof follows the same lines than that of Proposition~3 in \cite{BHS_commut}. The case $k>1$ is a consequence of the case $k=1$ and the inequality
\begin{equation*}
\|b-b_{2^kB}\|_{\varphi,B}\ \leq \ \|b-b_{B}\|_{\varphi,B} + \frac{1}{\varphi^{-1}(1)}\sum_{i=1}^k |b_{2^iB}-b_{2^{i-1}B}|.
\end{equation*}

\end{proof}

\section{Weights}\label{sec:pesos}

As in \cite{BHS_schr-w}, we need classes of weights that are given in terms of the critical radius function \eqref{laro}. Given $p> 1$, we define $A_p^{\rho,\infty}=\cup_{\theta\ge 0} A_p^{\rho,\theta}$, where $A_p^{\rho,\theta}$ is the set of weights $w$ such that
\begin{equation*}
\left(\frac{1}{|B|}\int_B w\right)^{1/p} \left(\frac{1}{|B|}\int_{B} w^{-\frac{1}{p-1}}\right)^{1/p'}\ \leq\ C \left(1+\frac{r}{\rho(x)}\right)^\theta,
\end{equation*}
for every ball $B=B(x,r)$.

For $p=1$ we define $A_1^{\rho,\infty}=\cup_{\theta\ge 0} A_1^{\rho,\theta}$, where $A_1^{\rho,\theta}$ is the set of weights $w$ such that
\begin{equation}\label{Aptheta}
\frac{1}{|B|}\int_B w\ \leq\ C \ \left(1+\frac{r}{\rho(x)}\right)^{\theta}\inf_B w,
\end{equation}
for every ball $B=B(x,r)$. 

\begin{rem}\label{rem:Apcubos}
It is not difficult to see that in \eqref{Aptheta} it is equivalent to consider  cubes instead of balls, due to Proposition~\ref{propo:rho_xy}.
\end{rem}

These classes of weights, that contain Muckenhoupt weights, were introduced in \cite{BHS_schr-w}, where the next property is proven.

\begin{propo}\label{propo:Apmenos}
If $w\in A_p^{\rho,\infty}$, $1<p<\infty$, then there exists $\eps>0$ such that $w\in A_{p-\eps}^{\rho,\infty}$.
\end{propo}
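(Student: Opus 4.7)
The plan is to mimic the classical proof that $A_p$ self-improves to $A_{p-\eps}$, tracking carefully how the perturbation factor $(1+r/\rho(x))^\theta$ enters at each step. Given $w\in A_p^{\rho,\theta}$, set $\sigma=w^{-1/(p-1)}$. The main ingredient I will need is a \emph{reverse H\"older inequality with perturbation} for $\sigma$: there should exist $s>1$, a constant $C$, and an exponent $\theta_1\ge 0$ such that
\begin{equation}\label{planRH}
\left(\frac{1}{|B|}\int_B \sigma^s\right)^{1/s} \leq C\left(1+\frac{r}{\rho(x)}\right)^{\theta_1}\frac{1}{|B|}\int_B \sigma,
\end{equation}
for every ball $B=B(x,r)$.

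To prove \eqref{planRH} I would split into two regimes. If $r\le\rho(x)$, the factor $(1+r/\rho(x))^\theta$ is bounded by $2^\theta$, and Proposition~\ref{propo:rho_xy} shows $\rho(y)\simeq\rho(x)$ for $y\in B$, so $w$ satisfies a uniform classical $A_p$ condition on all sub-balls of $B$ and $\sigma$ is locally $A_\infty$ with uniform constants. A standard Calder\'on--Zygmund stopping-time argument at levels $\la>\frac{1}{|B|}\int_B\sigma$ then yields good-$\la$ exponential decay and hence reverse H\"older on $B$ with some $s>1$ independent of $B$. For $r>\rho(x)$, I would cover $B$ by those critical balls $Q_j=B(x_j,\rho(x_j))$ from Proposition~\ref{propo:bol_critic} that meet $B$; by Corollary~\ref{coro:dero}, $\rho$ is comparable on each such $Q_j$ up to a factor $(1+r/\rho(x))^\ga$, so each $Q_j$ falls into the previous small-ball case, and summing the local reverse H\"older estimates via the bounded overlap property \eqref{overlap} yields \eqref{planRH} on $B$ at the cost of a polynomial factor in $1+r/\rho(x)$.

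Once \eqref{planRH} is available, I choose $\eps=(p-1)(1-1/s)>0$ so that $w^{-1/(p-\eps-1)}=\sigma^s$ and $p-\eps=1+(p-1)/s>1$. Then
\begin{equation*}
\left(\frac{1}{|B|}\int_B w^{-\frac{1}{p-\eps-1}}\right)^{\frac{p-\eps-1}{p-\eps}} = \left(\frac{1}{|B|}\int_B \sigma^s\right)^{\frac{p-\eps-1}{p-\eps}},
\end{equation*}
and combining this with \eqref{planRH} and the original $A_p^{\rho,\theta}$ inequality raised to the power $p/(p-\eps)$ yields the $A_{p-\eps}^{\rho,\theta''}$ condition with $\theta''=\theta p/(p-\eps)+\theta_1 s(p-\eps-1)/(p-\eps)$. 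The main obstacle is the reverse H\"older estimate \eqref{planRH} itself: making the covering-and-summing argument in the large-ball regime deliver a constant that grows only polynomially in $1+r/\rho(x)$ is where the geometry of the critical radius function really enters.
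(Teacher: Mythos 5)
Your approach is correct and is essentially the one the paper relies on: the paper gives no internal proof of Proposition~\ref{propo:Apmenos} but defers to \cite{BHS_schr-w}, where the result rests precisely on a reverse H\"older type inequality for $A_p^{\rho,\infty}$ weights with a polynomial perturbation factor (the paper itself invokes that inequality, Lemma~5 of \cite{BHS_schr-w}, in the proof of Lemma~\ref{lem:p1}). Your reduction from that reverse H\"older inequality to the $A_{p-\eps}^{\rho,\infty}$ condition is arithmetically sound, and your two-regime proof of the inequality (local Calder\'on--Zygmund argument on sub-critical balls, where Corollary~\ref{coro:dero} makes the classical $A_p$ constants uniform, plus a covering by critical balls with bounded overlap for large balls, using the polynomial doubling of $w^{-1/(p-1)}$ to sum) is the standard route.
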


The following results are extensions of very well known properties of $A_1$ weights.

\begin{lem}\label{lem:p1}
If $u\in A_1^{\rho,\infty}$, then there exists $\nu>1$ such that $u^\nu\in A_1^{\rho,\infty}$.
\end{lem}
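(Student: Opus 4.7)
The natural strategy, following the classical fact that $u\in A_1$ implies $u\in RH_\nu$ for some $\nu>1$, is to first establish a reverse-H\"older inequality with controlled loss in $1+r/\rho(x)$, and then feed it back into the $A_1^{\rho,\infty}$ condition. Concretely, the plan is to show that there exist $\nu>1$, $C>0$ and $\tta_1\geq 0$ such that
\begin{equation*}
\left(\frac{1}{|B|}\int_B u^\nu\right)^{1/\nu} \;\leq\; C\left(1+\frac{r}{\rho(x)}\right)^{\tta_1}\frac{1}{|B|}\int_B u \qquad (\star)
\end{equation*}
for every ball $B=B(x,r)$. Once $(\star)$ is available, using the hypothesis $u\in A_1^{\rho,\tta}$ on $B$ gives
\begin{equation*}
\left(\frac{1}{|B|}\int_B u^\nu\right)^{1/\nu} \;\leq\; C'\left(1+\frac{r}{\rho(x)}\right)^{\tta_1+\tta}\inf_B u \;=\; C'\left(1+\frac{r}{\rho(x)}\right)^{\tta_1+\tta}\bigl(\inf_B u^\nu\bigr)^{1/\nu},
\end{equation*}
and raising to the $\nu$-th power yields $u^\nu\in A_1^{\rho,\nu(\tta_1+\tta)}\subset A_1^{\rho,\infty}$.

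For $(\star)$ I would split according to the size of $r$ relative to $\rho(x)$. When $r\leq\rho(x)$, the factor $(1+r/\rho(x))^\tta$ is bounded, so the $A_1^{\rho,\tta}$ condition localizes to a classical $A_1$ condition. More precisely, for every sub-ball $B'=B(x',r')$ contained in $2Q_x:=B(x,2\rho(x))$, Proposition~\ref{propo:rho_xy} gives $\rho(x')\simeq\rho(x)$ and hence $r'/\rho(x')\leq C$, so that $u$ obeys the \emph{classical} $A_1$ condition on $2Q_x$ with a constant independent of $x$. The usual reverse-H\"older inequality for $A_1$ weights applied on $2Q_x$ then produces a $\nu>1$ uniform in $x$, and yields $(\star)$ with $\tta_1=0$.

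When $r>\rho(x)$, I would cover $B$ by the critical balls $Q_j=B(x_j,\rho(x_j))$ of Proposition~\ref{propo:bol_critic} that meet $B$. Using Corollary~\ref{coro:dero} one checks that $\rho(x_j)\leq Cr$ for such $j$, so every such $Q_j$ lies inside a fixed dilate $\sigma_0 B$ with $\sigma_0$ absolute; the bounded-overlap property then gives $\sum_{j:Q_j\cap B\neq\emptyset}|Q_j|\lesssim|B|$. Applying the local reverse-H\"older of the previous case on each $Q_j$ provides $\int_{Q_j}u^\nu\lesssim |Q_j|(\inf_{Q_j}u)^\nu$. Summing and comparing $\sum_j|Q_j|\inf_{Q_j}u$ with $\int_{\sigma_0 B}u$ via the $A_1^{\rho,\tta}$ control on $\sigma_0 B$ should give $(\star)$ with a positive $\tta_1$ depending on $\tta$, $N_0$ and $N_1$.

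The step I expect to be the main obstacle is the summation in the large-ball case: going from the local bounds $\int_{Q_j}u^\nu\lesssim|Q_j|(\inf_{Q_j}u)^\nu$ to a global estimate of the form $\int_B u^\nu\lesssim |B|^{1-\nu}(\int_B u)^\nu\,(1+r/\rho(x))^{\nu\tta_1}$ requires carefully exploiting Jensen's inequality, the overlap bound $\sum_j\chi_{Q_j}\leq N_1$, and the polynomial growth permitted by $A_1^{\rho,\tta}$ on the dilate $\sigma_0 B$; the remaining cases are essentially routine.
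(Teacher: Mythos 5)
Your proposal is correct and follows the same route as the paper: the paper's one-line proof consists of invoking exactly the reverse H\"older inequality $(\star)$ for these weight classes (quoted as Lemma~5 of \cite{BHS_schr-w}) and then the immediate deduction you display first. Your additional local/global argument sketching a proof of $(\star)$ itself (uniform classical $A_1$ on sub-critical balls via Proposition~\ref{propo:rho_xy}, then summation over the critical covering with bounded overlap for large balls) is sound and is essentially how that cited lemma is obtained.
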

\begin{proof}
This result follows immediately from the reverse H\"older type inequality valid for $A_p^{\rho,\infty}$ weights (see Lemma~5 in \cite{BHS_schr-w}).   
\end{proof}

For $\theta>0$ let us introduce the maximal function $M^\theta$ by
\begin{equation*}
M^\theta f (x) = \sup_{r>0} \frac{1}{\left(1+\frac{r}{\rho(x)}\right)^\theta} \frac{1}{|B(x,r)|} \int_{B(x,r)} |f|.
\end{equation*}

\begin{rem}\label{rem:delA1}
Observe that a weight $u$ belongs to $A_1^{\rho,\infty}$ if and only if there exists $\theta>0$ such that $M^\theta u \lesssim u$.
\end{rem}

\begin{lem}\label{lem:MA1}
Let $g\in L^1_{\text{loc}}$, $\theta\ge 0$ and $0<\de<1$, then $(M^\theta g)^\de \in A_1^{\rho,\infty}$.
\end{lem}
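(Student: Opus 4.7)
The plan is to mimic the Coifman--Rochberg argument that yields $(Mf)^\delta\in A_1$ for $0<\delta<1$, carefully tracking the new $\rho$-dependent factors. Fix an arbitrary ball $B=B(x_0,R)$ and split $g=g_1+g_2$ with $g_1=g\chi_{2B}$. Since $0<\delta<1$ we have $(a+b)^\delta\le a^\delta+b^\delta$, and since $M^\theta g_1\le Mg_1$, it is enough to prove separately that
\begin{equation*}
\frac{1}{|B|}\int_B (Mg_1)^\delta \le C\left(1+\tfrac{R}{\rho(x_0)}\right)^{\theta_1}\left(\inf_B M^\theta g\right)^\delta
\end{equation*}
and that, for every $y\in B$,
\begin{equation*}
M^\theta g_2(y) \le C\left(1+\tfrac{R}{\rho(x_0)}\right)^{\theta_2}\inf_B M^\theta g.
\end{equation*}
Combining these yields the $A_1^{\rho,\infty}$ condition for $(M^\theta g)^\delta$.

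For the local piece I would invoke Kolmogorov's inequality, which follows from the weak-type $(1,1)$ bound for the classical Hardy--Littlewood maximal operator, to obtain
\begin{equation*}
\frac{1}{|B|}\int_B (Mg_1)^\delta \le C_\delta\left(\frac{1}{|B|}\int_{2B}|g|\right)^\delta.
\end{equation*}
For any $y\in B$ one has $2B\subset B(y,3R)$ with $|B(y,3R)|\simeq|B|$, hence
\begin{equation*}
\frac{1}{|B|}\int_{2B}|g|\le C\left(1+\tfrac{3R}{\rho(y)}\right)^\theta M^\theta g(y),
\end{equation*}
and by Corollary~\ref{coro:dero}(i) this is dominated by $C(1+R/\rho(x_0))^{\theta N_0}M^\theta g(y)$. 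Taking the infimum over $y\in B$ gives the first estimate with $\theta_1=\theta N_0\delta$.

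For the far piece, note first that if $y\in B$ and $0<r\le R$ then $B(y,r)\subset 2B$, so $g_2$ vanishes on $B(y,r)$; only $r>R$ contributes. For such $r$ and any $y_0\in B$, $B(y,r)\subset B(y_0,3r)$ with comparable volumes, hence
\begin{equation*}
\frac{1}{|B(y,r)|}\int_{B(y,r)}|g_2|\le C\left(1+\tfrac{3r}{\rho(y_0)}\right)^\theta M^\theta g(y_0).
\end{equation*}
Dividing by $(1+r/\rho(y))^\theta$ and applying Corollary~\ref{coro:dero}(ii) to the points $y,y_0\in B(x_0,R)$ with $r>R$ to bound $(1+r/\rho(y_0))/(1+r/\rho(y))\le C(1+R/\rho(x_0))^\gamma$, we obtain
\begin{equation*}
\frac{1}{(1+r/\rho(y))^\theta}\cdot\frac{1}{|B(y,r)|}\int_{B(y,r)}|g_2|\le C\left(1+\tfrac{R}{\rho(x_0)}\right)^{\gamma\theta}M^\theta g(y_0).
\end{equation*}
Taking the supremum in $r>R$ and the infimum in $y_0\in B$ gives the second estimate with $\theta_2=\gamma\theta$.

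The main technical obstacle is the book-keeping in the far piece: the factor $(1+r/\rho(y))^{-\theta}$ coming from $M^\theta g_2(y)$ and the factor $(1+r/\rho(y_0))^\theta$ coming from the bound in terms of $M^\theta g(y_0)$ are based at different points, and reconciling them is exactly what Corollary~\ref{coro:dero}(ii) accomplishes in the regime $r>R$. The argument for $0<r\le R$ is free because $g_2$ vanishes there, and that is precisely why the dichotomy local/far is made at the scale of $2B$. Once this comparison is available, the rest is a routine Coifman--Rochberg style manipulation and the required exponent $\theta'$ for $A_1^{\rho,\theta'}$ can be taken as $\max(\theta_1,\theta_2\delta)$.
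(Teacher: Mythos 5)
Your proposal is correct and follows essentially the same route as the paper's proof: the same splitting $g=g\chi_{2B}+g_2$, Kolmogorov's inequality for the local part, and the comparison of $M^\theta g_2$ at different points of $B$ via Corollary~\ref{coro:dero} for the far part. The only cosmetic difference is that you bound $M^\theta g_2(y)$ directly by $M^\theta g(y_0)$ rather than by $M^\theta g_2(y_0)$, which changes nothing.
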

\begin{proof}
It is enough to prove that there exists $\be\ge 0$ such that for every ball $B_0=B(x_0,R_0)$,
\begin{equation}\label{infMde}
\frac{1}{|B_0|} \int_{B_0} (M^\theta g)^\de\ \lesssim\ \left(1+\frac{R_0}{\rho(x_0)}\right)^\be \inf_{B_0} (M^\theta g)^\de.
\end{equation}

We split $g=g_1 + g_2$, with $g_1=g\chi_{2B_0}$.

For $g_1$ we use the weak type $(1,1)$ of $M^\theta$ and Kolmogorov inequality to get for any $x\in B_0$,
\begin{equation*}
\frac{1}{|B_0|} \int_{B_0} (M^\theta g_1)^\de\ \lesssim\ \left(\frac{1}{|B_0|} \int_{2B_0} |g|\right)^\de\ \lesssim\ \left(1+\frac{R_0}{\rho(x)}\right)^{\theta\de} (M^\theta g(x))^\de.
\end{equation*}

Using \eqref{R0y} we arrive to the right hand side of \eqref{infMde}.

For the term with $g_2$ we have that for any $x$ and $y$ in $B(x_0,R_0)$
\begin{equation}\label{coso}
M^\theta g_2(x) \ \lesssim\ \left(1+\frac{R_0}{\rho(x_0)}\right)^{\ga\theta} M^\theta g_2(y),
\end{equation}
where $\ga$ is the constant appearing in \eqref{R0xy}.

In fact, considering a ball $B(x,r)$ with $r\ge R_0$ (otherwise the average of $g_2$ is zero), and using \eqref{R0xy} it follows
\begin{equation*}
\begin{split}
\frac{1}{\left(1+\frac{r}{\rho(x)}\right)^{\theta}} & \frac{1}{|B(x,r)|} \int_{B(x,r)} |g_2|\\
& \lesssim\ \left(1+\frac{R_0}{\rho(x_0)}\right)^{\ga\theta} \frac{1}{\left(1+\frac{r}{\rho(y)}\right)^{\theta}} \frac{1}{|B(y,Cr)|} \int_{B(y,Cr)} |g_2|
\end{split}
\end{equation*}
for any $y\in B_0$, leading to \eqref{coso}.

Raising \eqref{coso} to the $\de$ power and taking averages over $B_0$ respect to $x$ we arrive to the right hand side of \eqref{infMde} with $\be=\ga\theta\de$.

Finally, collecting the estimates for $g_1$ and $g_2$ the proof of the Lemma is finished.

\end{proof}

\section{Estimates of the Kernels}\label{sec:estK}

The operators $\rie$ and $\rie^*$ have singular kernels with values in $\RR^d$ that will be denoted by $\ke$ and $\ke^*$ respectively. For such kernels, we have the following estimates that are basically proved in \cite{shen} and \cite{chinos} (see also Lemma~3 in \cite{BHS_commut}).

\begin{lem}\label{lem:chinos}
Let $V\in RH_{q}$ with $q>d/2$.
\begin{enumerate}[(i)]
\item For every $N$ there exists a constant $C_N$ such that
\begin{equation}\label{kes}
|\ke^*(x,y)|\ \leq\ \frac{C_N\left(1+\frac{|x-y|}{\rho(x)}\right)^{-N}}{|x-y|^{d-1}} \left(\int_{B(y,|x-y|/4)} \frac{V(u)}{|u-y|^{d-1}}du + \frac{1}{|x-y|}\right).
\end{equation}
Moreover, the last inequality also holds with $\rho(x)$ replaced by $\rho(y)$.

\item For every $N$ and $0<\de<\min\{1,2-d/q\}$ there exists a constant $C$ such that 
\begin{equation}\label{kesdif}
\begin{split}|\ke^*(x,z)&-\ke^*(y,z)| \ \leq \\ & \frac{C\,|x-y|^{\de}\left(1+\frac{|x-z|}{\rho(x)}\right)^{-N}}{|x-z|^{d-1+\de}} \left(\int_{B(z,|x-z|/4)} \frac{V(u)}{|u-z|^{d-1}}du + \frac{1}{|x-z|}\right)
\end{split}
\end{equation}
whenever $|x-y|<\frac{2}{3}|x-z|$. Moreover, the last inequality also holds with $\rho(x)$ replaced by $\rho(z)$.

\item If $\mathbf{K}^*$ denotes the $\RR^d$ vector valued kernel of the adjoint of the classical Riesz operator, then
\begin{equation}\label{kesyclas}
\begin{split}
|\ke^*(x,z)&-\mathbf{K}^*(x,z)| \leq \\ & \frac{C}{|x-z|^{d-1}}  \left(\int_{B(z,|x-z|/4)} \frac{V(u)}{|u-z|^{d-1}}du + \frac{1}{|x-z|}\left(\frac{|x-z|}{\rho(x)}\right)^{2-\frac{d}{q}}\right),
\end{split}
\end{equation}
whenever $|x-z|\leq \rho(x)$.
\item \label{qgrande} When $q> d$, the term involving $V$ can be dropped from inequalities \eqref{kes} and \eqref{kesyclas}.

\item  If $q> d$, the term involving $V$ can be dropped from inequalities \eqref{kes}, \eqref{kesdif} and \eqref{kesyclas}. 
\end{enumerate}
\end{lem}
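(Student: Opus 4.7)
The plan is to deduce all the estimates for $\ke$ --- and thus for $\ke^*$, since $\ke^*(x,y)=\ke(y,x)$ as vector-valued kernels --- from pointwise bounds on the heat kernel $k_t(x,y)$ of $\LL$ through the subordination formula
\[
\LL^{-1/2}\ =\ \frac{1}{\sqrt{\pi}}\int_0^\infty e^{-t\LL}\,\frac{dt}{\sqrt{t}},
\]
which yields
\[
\ke(x,y)\ =\ \frac{1}{\sqrt{\pi}}\int_0^\infty \nabla_x k_t(x,y)\,\frac{dt}{\sqrt{t}}.
\]
The key inputs would be the Gaussian upper bounds with $\rho$-decay at large times of Dziubański--Zienkiewicz, Kurata and Shen, namely $|k_t(x,y)|\lesssim t^{-d/2}e^{-c|x-y|^2/t}(1+\sqrt{t}/\rho(x))^{-N}$, together with an analogous estimate for $\nabla_x k_t$ with an extra $t^{-1/2}$-factor plus a term encoding the contribution of $V$ via Feynman--Kac.

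For (i) I would split the $t$-integral at $t\sim|x-y|^2$. The part $t\le|x-y|^2$ contributes the classical size $|x-y|^{-(d-1)}$ together with the integral against $V(u)/|u-y|^{d-1}$, whereas the tail $t\ge|x-y|^2$ --- where the Gaussian is harmless --- is controlled by $(1+\sqrt{t}/\rho(x))^{-N}$ and integrates up to the decay factor $(1+|x-y|/\rho(x))^{-N}$. The interchange between $\rho(x)$ and $\rho(y)$ is handled by Proposition~\ref{propo:rho_xy}: when $|x-y|\lesssim\rho(x)$ the two radii are comparable, while in the opposite regime any polynomial discrepancy is absorbed by the arbitrarily large exponent $N$.

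For (ii) the same decomposition applies after replacing $\nabla_x k_t$ by its $x$-Hölder difference
\[
|\nabla_x k_t(x,z)-\nabla_x k_t(y,z)|\ \lesssim\ \bigl(|x-y|/\sqrt{t}\bigr)^{\de}\,|\nabla_x k_t(x,z)|,
\]
valid for $0<\de<\min\{1,2-d/q\}$; the restriction on $\de$ reflects the regularity that $RH_q$ imparts to $V$, and Lemma~\ref{lem:casilema} is used to dominate the potential term. For (iii) I would compare $k_t$ with the classical heat kernel $h_t$ via Duhamel,
\[
k_t(x,y)-h_t(x,y)\ =\ -\int_0^t\!\!\int_{\RR^d} k_s(x,u)\,V(u)\,h_{t-s}(u,y)\,du\,ds,
\]
so that $\ke-\rieclas$ inherits an extra $V$-factor; integrating in $t$ and applying Lemma~\ref{lem:casilema} in the regime $|x-z|\le\rho(x)$ yields the gain $(|x-z|/\rho(x))^{2-d/q}$.

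Finally, items (iv) and (v) reduce to showing that, when $V\in RH_q$ with $q>d$, the $V$-integral in each estimate is absorbed by the elementary $|x-z|^{-1}$ term. I would apply Hölder's inequality on $B(z,|x-z|/4)$ together with the $RH_q$ condition, and then Lemma~\ref{lem:casilema}, to bound that integral by $C|x-z|^{-1}(|x-z|/\rho(x))^{2-d/q}$, which is indeed dominated by $|x-z|^{-1}$ in the relevant range. The main obstacle I expect is the bookkeeping needed to switch between $\rho(x)$, $\rho(y)$ and $\rho(z)$ while keeping the decay exponent $N$ free; for (iii) it is also crucial to split the Duhamel integral into $0<s<t/2$ and $t/2<s<t$ so that Gaussian control is available on exactly one factor in each piece.
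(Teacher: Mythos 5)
The paper does not actually prove this lemma: the estimates are quoted as ``basically proved in \cite{shen} and \cite{chinos} (see also Lemma~3 in \cite{BHS_commut})'', where they are obtained from the resolvent representation $\LL^{-1/2}=\pi^{-1}\int_0^\infty\la^{-1/2}(\LL+\la)^{-1}d\la$ and estimates on the fundamental solution $\Ga(x,y,\la)$ of $-\lap+V+\la$ (Fefferman--Phong inequality for the $\rho$-decay, and a representation formula for $\nabla_x\Ga$ in terms of $\nabla_x\Ga_0$ applied to $V\Ga$ for the term $\int_{B}V(u)|u-y|^{1-d}du$). Your heat-kernel/subordination route is a legitimate alternative in principle, but as written it has a genuine gap: the entire difficulty of the lemma is hidden in your unproven ``key inputs''. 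For $d/2<q<d$ the gradient $\nabla_x k_t(x,y)$ does \emph{not} satisfy a pointwise Gaussian bound with merely ``an extra $t^{-1/2}$-factor''; the correct pointwise bound necessarily contains the very integral of $V$ that appears in \eqref{kes}, and proving it requires the same local $W^{1,p}$ regularity and Fefferman--Phong machinery as the resolvent approach. Asserting it ``via Feynman--Kac'' is not a proof, since Feynman--Kac controls $k_t$ itself, not its gradient. Likewise, the H\"older-difference estimate you invoke for part (ii), $|\nabla_x k_t(x,z)-\nabla_x k_t(y,z)|\lesssim(|x-y|/\sqrt t)^{\de}|\nabla_x k_t(x,z)|$, cannot hold as stated (the right-hand side may vanish while the left does not); the correct statement bounds the difference by $(|x-y|/\sqrt t)^{\de}$ times the \emph{upper bound} for the gradient, and this is again a nontrivial De Giorgi--Nash--Moser type result depending on $V\in RH_q$, not something that can be taken as given.

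Two smaller points. For items (iv)--(v) your H\"older/$RH_q$ argument works on the range $|x-y|\le\rho(y)$, where Lemma~\ref{lem:casilema} applies and gives $\int_{B}V(u)|u-y|^{1-d}du\lesssim|x-y|^{-1}(|x-y|/\rho)^{2-d/q}\le|x-y|^{-1}$; but for $|x-y|>\rho(y)$ the integral of $V$ over the ball grows like $(|x-y|/\rho)^{\mu d}$ and is \emph{not} dominated by $|x-y|^{-1}$ --- you must absorb the excess into the arbitrarily large exponent $N$ in $(1+|x-y|/\rho(x))^{-N}$, which you do not mention. Finally, the Duhamel argument for (iii) is the right idea, but to extract the gain $(|x-z|/\rho(x))^{2-d/q}$ (rather than just boundedness of the difference) you again need quantitative control of $\int_0^{t}\!\int k_s(x,u)V(u)|\nabla_x h_{t-s}(u,y)|\,du\,ds$ in terms of Lemma~\ref{lem:casilema}, and the gradient falls on the factor for which you have no Gaussian bound unless you integrate by parts carefully; this bookkeeping is exactly where the restriction $|x-z|\le\rho(x)$ and the exponent $2-d/q$ come from and should be carried out, not merely announced.
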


The following lemma improves a result appearing in \cite{chinos}.

\begin{lem}\label{lem:hormander}
Let $V\in RH_q$ with $d/2<q<d$ and $s$ such that $\frac{1}{s}=\frac{1}{q} -\frac{1}{d}$. Then the kernel $\ke$ satisfies the following H\"ormander type inequality
\begin{equation}\label{hormander}
\sum_k k(2^k r)^{d/s'} \left(1 + \frac{2^kr}{\rho(x_0)}\right)^\theta \left(\int_{|x-x_0|\sim 2^k r} |\ke(x,y)-\ke(x,x_0)|^s\,dx\right)^{1/s} \ \leq \ C_\theta,
\end{equation}
whenever $|y-x_0|<r$, and $r\ge 0$.
\end{lem}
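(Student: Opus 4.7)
The plan is to pointwise control $|\ke(x,y) - \ke(x,x_0)|$ on the dyadic annuli $A_k = \{x : |x - x_0| \sim R_k\}$ with $R_k = 2^k r$, then take $L^s(A_k)$ norms and sum. Since $\rie$ and $\rie^*$ have adjoint kernels, smoothness of $\ke^*$ in its first entry provided by \eqref{kesdif} transfers to smoothness of $\ke$ in its second entry. I would apply this, together with Proposition~\ref{propo:rho_xy} to exchange $\rho(x)$ for $\rho(x_0)$ (the polynomial correction being absorbed by enlarging the free exponent $N$), to obtain, for every $k$ such that $|y-x_0| < \tfrac{2}{3}|y-x|$ holds on $A_k$,
$$|\ke(x,y) - \ke(x,x_0)| \lesssim \frac{r^\de}{R_k^{d-1+\de}}\Bigl(1 + \frac{R_k}{\rho(x_0)}\Bigr)^{-N}\Bigl(U_k(x) + \frac{1}{R_k}\Bigr),$$
where $U_k(x) = \int_{B(x, R_k/4)} V(u)\,|u - x|^{1-d}\,du$ and $0 < \de < \min\{1, 2 - d/q\}$. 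The finitely many initial $k$ for which the smoothness condition fails are handled by applying the pointwise size estimate \eqref{kes} to $\ke(x,y)$ and $\ke(x,x_0)$ separately within the same $L^s$ scheme.

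The decisive step is the $L^s(A_k)$ bound on $U_k$. Since $B(x, R_k/4) \subset B(x_0, CR_k)$ for every $x \in A_k$, I have $U_k \leq I_1\!\bigl(V\chi_{B(x_0, CR_k)}\bigr)$, where $I_1$ denotes the classical Riesz potential of order $1$. The Hardy--Littlewood--Sobolev inequality, whose exponent relation $\tfrac{1}{s} = \tfrac{1}{q} - \tfrac{1}{d}$ is precisely the hypothesis of the lemma, gives
$$\|U_k\|_{L^s(A_k)} \lesssim \|V\|_{L^q(B(x_0, CR_k))} \lesssim R_k^{d/q - d}\int_{B(x_0, CR_k)} V,$$
the second inequality being the reverse H\"older hypothesis. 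For $R \leq \rho(x_0)$ the definition \eqref{laro} of $\rho$ bounds $\int_{B(x_0, R)} V \leq R^{d-2}$; for larger $R$, the doubling of $V\,dx$ together with Proposition~\ref{propo:bol_critic} gives a polynomial growth $\int_{B(x_0, R)} V \lesssim R^{d-2}(1 + R/\rho(x_0))^\kappa$ for some fixed $\kappa \geq 0$. Using $d/q = d/s + 1$, this yields $\|U_k\|_{L^s(A_k)} \lesssim R_k^{d/s - 1}(1 + R_k/\rho(x_0))^\kappa$, and the $1/R_k$ term contributes the same order $\|1/R_k\|_{L^s(A_k)} \sim R_k^{d/s - 1}$.

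Assembling everything and using $d/s + d/s' = d$, the $k$-th summand collapses to $k\,2^{-k\de}\bigl(1 + R_k/\rho(x_0)\bigr)^{\theta + \kappa - N}$. Choosing $N > \theta + \kappa$, the last factor is $\leq 1$, and the geometric-type series $\sum_k k\,2^{-k\de}$ converges, closing the estimate.

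I expect the main obstacle to be the $L^s$-control of $U_k$: one has to recognize it as a restricted fractional integral and invoke HLS with the exponent $s$ precisely matched to $q$ by the hypothesis, while carefully tracking the polynomial-in-$R/\rho(x_0)$ growth of $\int_{B(x_0,R)} V$ at large scales so it is absorbed by the freely chosen decay $(1 + R_k/\rho(x_0))^{-N}$ coming from the kernel estimate. The improvement over \cite{chinos} presumably lies in this exact matching of HLS exponents, which avoids a loss that earlier arguments suffered.
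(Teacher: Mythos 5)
Your proposal is correct and follows essentially the same route as the paper: apply the smoothness estimate \eqref{kesdif} (transferred to the second variable of $\ke$), dominate the potential term by $I_1(V\chi_{B(x_0,CR_k)})$ and use the $L^q\to L^s$ boundedness of $I_1$ together with the reverse H\"older inequality, then control $\int_{B(x_0,R)}V$ via the definition of $\rho$ for $R\le\rho(x_0)$ and doubling for $R>\rho(x_0)$, absorbing the polynomial growth into the free exponent $N$. The only cosmetic differences are your separate treatment of the finitely many initial annuli and the single unified exponent $\kappa$ in place of the paper's two-case exponent $\be$.
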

\begin{proof}
We follow the lines of the proof of Lemma~4 in \cite{chinos} but performing a more careful estimate.

Using \eqref{kesdif} we get
\begin{equation*}
\begin{split}
& \left(\int_{|x-x_0|\sim 2^k r} |\ke(x,y)-\ke(x,x_0)|^s\,dx \right)^{1/s} \\
& \hspace{30pt} \lesssim \ (2^kr)^{(1-d)} 2^{-k\de} \left(1 + \frac{2^kr}{\rho(x_0)}\right)^{-N}\ \left(\|I_1(V\chi_{B(x_0,2^kr)})\|_s + (2^kr)^{\frac{d}{s}-1}\right),
\end{split}
\end{equation*}
where $I_1$ stands for the fractional integral operator of order one.

The estimate of \eqref{hormander} involving the second term above follows easily.

Now, from the boundedness of $I_1$ and the fact that $V\in RH_q$,
\begin{equation}\label{laI1}
\|I_1(V\chi_{B(x_0,2^kr)})\|_s \ \leq \ (2^kr)^{-\frac{d}{q'}} \int_{B(x_0,2^kr)} V,
\end{equation}
where the last integral can be estimated as
\begin{equation}\label{paralaV}
\int_{B(x_0,2^kr)} V \leq \ (2^kr)^{d-2}\left(\frac{2^kr}{\rho(x_0)}\right)^{\be}
\end{equation}
with $\be=2-\frac{d}{q}$ when $2^kr\leq\rho(x_0)$ and $\be=\mu d$, $\mu\ge 1$ in other case (see \cite{BHS_schr-w}).

Therefore we can bound the left hand side of \eqref{laI1} by either $\rho(x_0)^{\frac{d}{q}-2}$ or
\begin{equation*}
(2^k r)^{\frac{d}{q}-2} \left(\frac{2^k r}{\rho(x_0)}\right)^{\mu d} \ \text{ with $\mu\ge 1$}.
\end{equation*}
Now, to finish the estimate of the sum on the left hand side of \eqref{hormander} we first sum over $k\in J_1 = \{k\in\NN:\ 2^kr\leq \rho(x_0)\}$. For such sum, using the above estimates and that $2-\frac{d}{q}>0$ we get the bound
\begin{equation*}
\sum_k k\, 2^{-k\de}\,(2^k r)^{1-d+\frac{d}{s'}} \rho(x_0)^{\frac{d}{q}-2} \ \lesssim \ \sum_k k\, 2^{-k\de} \ \lesssim 1.
\end{equation*}
Similarly, the other sum can be bounded by
\begin{equation*}
\sum_k k\, 2^{-k\de}\,(2^k r)^{-1-d+\frac{d}{q}+\frac{d}{s'}} \left(\frac{2^kr}{\rho(x_0)}\right)^{-N+\theta+\mu d} \ \lesssim \ \sum_k k\, 2^{-k\de} \ \lesssim 1,
\end{equation*}
choosing $N$ large enough. 

\end{proof}

\begin{lem}\label{lem:kcor}
Let $V\in RH_q$, $\frac{d}{2}<q<d$, and $\frac{1}{s}=\frac{1}{q}-\frac{1}{d}$. Then, for all $N$ there exists $C_N$ such that for any ball $B=B(z,r)$ with $r\ge\rho(z)$, $y\in B$ and $B^k=2^kB$, the inequality
\begin{equation}\label{estkcor}
\left(\int_{B^k\setminus B^{k-1}}|\ke(x,y)|^s\, dx\right)^{1/s} \ \leq \ C_N (2^kr)^{-1-\frac{d}{q'}}\left(\frac{\rho(z)}{2^kr}\right)^{N-\mu d}
\end{equation}
holds for some $\mu\ge 1$, which depends only on the constants appearing in the doubling condition that $V$ satisfies.
\end{lem}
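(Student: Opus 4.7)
The plan is to apply the pointwise size bound \eqref{kes} for $\ke$ (the same form of estimate holds for $\ke$ by symmetric arguments) and then integrate over the annulus. For $x\in B^k\setminus B^{k-1}$ and $y\in B$ we have $|x-y|\sim|x-z|\sim 2^kr$, so using \eqref{kes} (with $\rho$ at the first variable) and the upper bound in \eqref{rox_vs_roy} to pass from $\rho(x)$ to $\rho(z)$, one obtains for any $M\ge1$
\[
|\ke(x,y)|\ \lesssim\ \frac{(\rho(z)/(2^kr))^{M/(N_0+1)}}{(2^kr)^{d-1}}\left(I_1(V\chi_{B(z,C2^kr)})(x)+\frac{1}{2^kr}\right),
\]
where $I_1$ is the classical fractional integral of order~$1$. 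Here I use that $2^kr\ge r\ge\rho(z)$, so that $1+|x-z|/\rho(z)\simeq 2^kr/\rho(z)\ge1$ and \eqref{rox_vs_roy} gives $\rho(x)\lesssim \rho(z)(2^kr/\rho(z))^{N_0/(N_0+1)}$.

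From here I would split the estimate into two contributions. The purely singular term contributes
\[
\Bigl(\int_{B^k}\frac{dx}{(2^kr)^{ds}}\Bigr)^{1/s}\lesssim (2^kr)^{-d/s'},
\]
and the identity $1/s'=1/q'+1/d$ (which follows directly from $1/s=1/q-1/d$) rewrites this as $(2^kr)^{-1-d/q'}$, precisely the main power required. For the potential term I would use the $L^q\!\to\! L^s$ boundedness of $I_1$ to get
\[
\|I_1(V\chi_{B(z,C2^kr)})\|_{L^s}\ \lesssim\ \|V\|_{L^q(B(z,C2^kr))},
\]
and then invoke $V\in RH_q$ to bound the right-hand side by $(2^kr)^{d/q-d}\int_{B(z,C2^kr)}V$. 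Because $2^kr\ge\rho(z)$, the estimate \eqref{paralaV} used in Lemma~\ref{lem:hormander} applies with $\beta=\mu d$, giving $\int V\lesssim (2^kr)^{d-2}(2^kr/\rho(z))^{\mu d}$. Collecting powers, the potential contribution is
\[
(2^kr)^{-1-d/q'}\,(2^kr/\rho(z))^{\mu d}\,(\rho(z)/(2^kr))^{M/(N_0+1)}.
\]

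To finish, given the arbitrary $N$ in the statement, I would choose $M$ large enough that $M/(N_0+1)\ge N$, so that the excess factor $(2^kr/\rho(z))^{\mu d}$ is absorbed and the required decay $(\rho(z)/(2^kr))^{N-\mu d}$ emerges (the singular contribution, having no $\mu d$ loss, is clearly better). The main technical obstacle is the exponent bookkeeping at this final step: one must ensure that the positive decay $(1+|x-y|/\rho(x))^{-M}$ available from \eqref{kes}, once transferred to $\rho(z)$, strictly dominates the growth $(2^kr/\rho(z))^{\mu d}$ inherited from the $V$-mass estimate; this is precisely where the free parameter $M$ in \eqref{kes} is exploited and why the constant $C_N$ is allowed to depend on $N$.
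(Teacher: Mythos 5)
Your proof is correct and follows essentially the same route as the paper's: a pointwise size bound for $\ke$ with the $\left(1+\frac{|x-y|}{\rho(\cdot)}\right)^{-N}$ decay, conversion of $\rho(x)$ to $\rho(z)$ via \eqref{rox_vs_roy} using $2^kr\ge\rho(z)$, a split into the purely singular term and the $I_1(V\chi)$ term, and then the $L^q\to L^s$ boundedness of $I_1$ together with $RH_q$, the doubling of $V$ and the definition of $\rho$ to produce the $(2^kr/\rho(z))^{\mu d}$ loss, which is absorbed by taking the free decay exponent large. The exponent bookkeeping ($d/s-d=-1-d/q'$) matches the paper's computation exactly.
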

\begin{proof}
From Lemma~\ref{lem:chinos} we know
\begin{equation}\label{kesder}
|\ke(x,y)|\ \leq\ \frac{C_N\left(1+\frac{|x-y|}{\rho(x)}\right)^{-N}}{|x-y|^{d-1}} \left(\int_{B(y,2|x-y|)} \frac{V(u)}{|u-y|^{d-1}}du + \frac{1}{|x-y|}\right).
\end{equation}
Now, for $B$ and $y$ as in the statement and $x\in B^k\setminus B^{k-1}$ we have $B(y,2|x-y|)\subset B^{k+1}$. Also, since $x\in B^{k+1}$ we may use Corollary~\ref{coro:dero} to reduce \eqref{kesder}, with perhaps a different $N$, to
\begin{equation*}
|\ke(x,y)| \ \leq \ C_N (2^kr)^{1-d}\left(\frac{\rho(z)}{2^kr}\right)^{N} \left(\frac{1}{2^kr} + I_1(\chi_{B^{k+1}}V)(y)\right).
\end{equation*}
Therefore,
\begin{equation*}
\left(\int_{B^k\setminus B^{k-1}}|\ke(x,y)|^s\, dx\right)^{1/s} \ \lesssim \ (2^kr)^{1-d}\left(\frac{\rho(z)}{2^kr}\right)^{N}\left( (2^kr)^{\frac{d}{s}-1} + \|I_1(\chi_{B^{k+1}}V)\|_s\right).
\end{equation*}

According to inequalities \eqref{laI1} and \eqref{paralaV} we have
\begin{equation*}
\|I_1(\chi_{B^{k+1}}V)\|_s \lesssim \ (2^kr)^{-\frac{d}{q'}+d-2}\left(\frac{2^kr}{\rho(z)}\right)^{\mu d}
\end{equation*}
for some $\mu\ge 1$. Thus, plugging this estimate and using that $r\ge\rho(z)$ and that $\frac{d}{s}-1 = \frac{d}{q}-2= -\frac{d}{q'}+d-2$, we arrive to \eqref{estkcor}.

\end{proof}

\section{$L^p$ inequalities}\label{sec:lpineq}

For an operator $T$ we associate the {\it local} and {\it global} operators of $T$ as
\begin{equation*}
T_{\text{loc}}f(x)= T(f\chi_{B(x,\rho(x))})(x)
\end{equation*}
and
\begin{equation*}
T_{\text{glob}}f(x)= T(f\chi_{B(x,\rho(x))^c})(x)
\end{equation*}
respectively, where the first integral should be understood, if necessary, in the sense of principal value.

In the following Theorem, we use a larger classes of weights $A_p^{\rho,\text{loc}}$ already defined in \cite{BHS_schr-w} as those weights that satisfy the classical $A_p$ condition for balls $B(x,r)$ with $r\leq \rho(x)$. From the well known proof for $A_p$ classes it is easy to derive the following result.

\begin{propo}\label{propo:Apmenosloc}
If $w\in A_p^{\rho,\text{loc}}$, $1<p<\infty$, then there exists $\eps>0$ such that $w\in A_{p-\eps}^{\rho,\text{loc}}$.
\end{propo}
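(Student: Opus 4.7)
My plan is to adapt the classical Coifman--Fefferman self-improvement argument ($A_p\Rightarrow A_{p-\eps}$), checking that every step is sufficiently local to survive under the weaker hypothesis $A_p^{\rho,\text{loc}}$. The scheme has two stages: first, establish a local reverse H\"older inequality for $\sigma:=w^{-1/(p-1)}$; then, deduce $A_{p-\eps}^{\rho,\text{loc}}$ by a routine algebraic manipulation.

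For the first stage, I would prove that there exist $\de>0$ and $C>0$ with
\begin{equation*}
\left(\frac{1}{|B|}\int_B \sigma^{1+\de}\right)^{\frac{1}{1+\de}} \ \leq \ C\,\frac{1}{|B|}\int_B \sigma
\end{equation*}
for every ball $B=B(x,r)$ with $r\leq\rho(x)$. Fix such a ball $B_0=B(x_0,r_0)$, normalize so that $\frac{1}{|B_0|}\int_{B_0}\sigma=1$, and run the standard Whitney--Calder\'on--Zygmund dyadic stopping-time decomposition inside $B_0$ at heights $\lambda>1$. This yields disjoint subcubes $\{Q_j^\lambda\}\subset B_0$ on which $\lambda<\sigma_{Q_j^\lambda}\leq 2^d\lambda$. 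By Proposition~\ref{propo:rho_xy}, $\rho(y)\simeq\rho(x_0)$ throughout $B_0$, so each $Q_j^\lambda$ sits at a scale comparable to its own critical radius and the $A_p^{\rho,\text{loc}}$ hypothesis on $w$ (equivalently, the $A_\infty$-type property of $\sigma$) applies on $Q_j^\lambda$ up to a harmless constant. The resulting good-$\lambda$ inequality
\begin{equation*}
|\{y\in Q_j^\lambda:\ \sigma(y)>2^{d+1}\lambda\}|\ \leq\ (1-\eta)\,|Q_j^\lambda|,
\end{equation*}
for some fixed $\eta>0$, summed over $j$ and integrated in $\lambda$, produces the reverse H\"older estimate.

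For the second stage, pick $\eps>0$ small enough that $r:=(p-1)/(p-\eps-1)\leq 1+\de$; write $w^{-1/(p-\eps-1)}=\sigma^r$ and apply the reverse H\"older inequality just established on an arbitrary ball $B=B(x,R)$ with $R\leq\rho(x)$ to obtain
\begin{equation*}
\left(\frac{1}{|B|}\int_B w^{-\frac{1}{p-\eps-1}}\right)^{p-\eps-1}\ \leq\ C\,\left(\frac{1}{|B|}\int_B \sigma\right)^{p-1}.
\end{equation*}
Multiplying by $\frac{1}{|B|}\int_B w$ and invoking the $A_p^{\rho,\text{loc}}$ condition on $w$ on the same ball yields the $A_{p-\eps}^{\rho,\text{loc}}$ estimate.

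The main obstacle I anticipate is the technical check in the first stage that each Whitney subcube $Q_j^\lambda$ is indeed at a scale where $A_p^{\rho,\text{loc}}$ applies. This is exactly what the left-hand inequality in \eqref{rox_vs_roy} provides: for $y\in B_0$ one has $\rho(y)\geq c_0^{-1}2^{-N_0}\rho(x_0)$, so $\rho$ is uniformly comparable to $\rho(x_0)\geq r_0$ throughout $B_0$, with a fixed multiplicative constant. The discrepancy between this constant and $1$ is absorbed either by a bounded covering of the offending $Q_j^\lambda$ by balls whose radii are $\leq\rho$ at their centers, or simply by enlarging the constant in the $A_p^{\rho,\text{loc}}$ hypothesis, in the same spirit as Remark~\ref{rem:Apcubos} for the balls-versus-cubes formulation.
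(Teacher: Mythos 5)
Your argument is correct and is precisely what the paper intends: the paper's entire proof is the remark that the result follows ``from the well known proof for $A_p$ classes,'' and you have supplied that classical Coifman--Fefferman argument together with the only nontrivial localization check, namely that Proposition~\ref{propo:rho_xy} makes $\rho$ uniformly comparable to $\rho(x_0)$ throughout a sub-critical ball, so every stopping cube is again sub-critical up to a fixed dilation constant that is harmless in the spirit of Remark~\ref{rem:Apcubos}. One cosmetic point: the displayed good-$\lambda$ inequality should be stated in $\sigma$-measure, $\sigma(\{y\in Q_j^\lambda:\ \sigma(y)>2^{d+1}\lambda\})\leq(1-\eta)\,\sigma(Q_j^\lambda)$ --- the Lebesgue-measure version you wrote follows from Chebyshev alone and is the \emph{input} to, not the output of, the $A_\infty$-type property of $\sigma$ on the stopping cubes.
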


Let us observe that a function $b\in BMO_\infty(\rho)$ has bounded mean oscillations over sub-critical balls, that is balls $B(x,r)$ with $r\leq \rho(x)$. For the next result we shall denote $BMO_\text{loc}(\rho)$ the set of functions with the latter property.

\begin{rem}\label{rem:bmoloc}
Notice that using Proposition~\ref{propo:bol_critic} it is possible to prove that for each constant $C$ we have $BMO_\text{loc}(\rho) = BMO_\text{loc}(C\rho)$ and the norms are equivalent with a constant that depends on $C$.
\end{rem}

\begin{teo}\label{teo:clasiloc}
Let $\rho$ a function satisfying \eqref{rox_vs_roy} and $b\in BMO_\text{loc}(\rho)$, then $(R_b)_{\text{loc}}$ are bounded on $L^p(w)$, $1< p<\infty$, for $w\in A_p^{\rho,\text{loc}}$.
\end{teo}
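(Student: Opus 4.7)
The plan is to reduce the estimate to the classical weighted commutator theorem applied on each ball of the Whitney-type cover furnished by Proposition~\ref{propo:bol_critic}. I would fix the cover $\{Q_j\}$ with $Q_j=B(x_j,\rho(x_j))$ and choose $c\ge 1$, depending only on the constants in \eqref{rox_vs_roy}, so that the enlargement $\tilde Q_j:=cQ_j$ contains $B(x,\rho(x))$ for every $x\in Q_j$; this follows from $\rho(x)\simeq\rho(x_j)$ on $Q_j$. On each $\tilde Q_j$, three local facts put us in the classical setting with constants uniform in $j$: (i) $w|_{\tilde Q_j}$ satisfies the classical $A_p$ condition (directly from $A_p^{\rho,\text{loc}}$, since $\tilde Q_j$ is sub-critical); (ii) $b|_{\tilde Q_j}\in BMO(\tilde Q_j)$ (from $b\in BMO_{\text{loc}}(\rho)$ and Remark~\ref{rem:bmoloc}); and (iii) the kernel $\ke(x,y)$ with $x,y\in \tilde Q_j$ satisfies Calder\'on--Zygmund size and regularity (or a H\"ormander-type) estimates at the scale $\rho(x_j)$ via Lemma~\ref{lem:chinos} combined with Lemma~\ref{lem:casilema}. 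By the bounded overlap of $\{\tilde Q_j\}$, it therefore suffices to establish
\[
\int_{Q_j}|(R_b)_{\text{loc}}f|^p w\ \lesssim\ \int_{\tilde Q_j}|f|^p w
\]
uniformly in $j$.

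For $x\in Q_j$, since $B(x,\rho(x))\subset \tilde Q_j$, I would split
\[
(R_b)_{\text{loc}}f(x)=R_b(f\chi_{\tilde Q_j})(x)-\int_{\tilde Q_j\setminus B(x,\rho(x))}\ke(x,y)\bigl(b(y)-b(x)\bigr)f(y)\,dy.
\]
The first term equals $[b,T_j](f\chi_{\tilde Q_j})(x)$, where $T_jg(x):=\int_{\tilde Q_j}\ke(x,y)g(y)\,dy$. Invoking (iii) and the classical weighted commutator theorem for BMO symbols and $A_p$ weights,
\[
\|R_b(f\chi_{\tilde Q_j})\|_{L^p(\tilde Q_j,w)}\ \lesssim\ [b]_{BMO(\tilde Q_j)}\,\|f\|_{L^p(\tilde Q_j,w)},
\]
which is precisely the bound we need for the main contribution.

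For the remainder, $y\in \tilde Q_j\setminus B(x,\rho(x))$ forces $|x-y|\simeq \rho(x_j)$, so $|\ke(x,y)|\lesssim |\tilde Q_j|^{-1}$ by \eqref{kes}. Writing $b(y)-b(x)=(b(y)-b_{\tilde Q_j})-(b(x)-b_{\tilde Q_j})$, this remainder is pointwise bounded on $Q_j$ by
\[
\frac{C}{|\tilde Q_j|}\int_{\tilde Q_j}|b-b_{\tilde Q_j}||f|\ +\ C\,|b(x)-b_{\tilde Q_j}|\,\frac{1}{|\tilde Q_j|}\int_{\tilde Q_j}|f|,
\]
and both pieces are controlled in $L^p(Q_j,w)$ by a constant times $\int_{\tilde Q_j}|f|^p w$, using H\"older's inequality in the duality $(w,w^{1-p'})$, together with the weighted John--Nirenberg inequality for $b-b_{\tilde Q_j}$ and the local $A_p$ property of $w$. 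Summing over $j$ by bounded overlap finishes the proof. The main technical obstacle is step (iii): checking that $T_j$ falls within the framework of the classical commutator theorem on $\tilde Q_j$ in the full range $q>d/2$. For $q\ge d$ pointwise smoothness of $\ke$ is available via Lemma~\ref{lem:chinos}, but for $d/2<q<d$ one must base the argument on the H\"ormander-type inequality of Lemma~\ref{lem:hormander} and invoke the corresponding extension of the weighted commutator theorem.
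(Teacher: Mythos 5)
Your decomposition (localize to $\tilde Q_j=cQ_j$, treat $R_b(f\chi_{\tilde Q_j})$ by the classical weighted commutator theorem, and control the error over $\tilde Q_j\setminus B(x,\rho(x))$ by an averaging operator) is exactly the paper's scheme, but two points need repair. First, the passage from your local facts (i)--(ii) to the classical weighted commutator theorem is not automatic: that theorem requires a symbol in $BMO(\RR^d)$ and a weight in $A_p(\RR^d)$, whereas you only have $A_p$-type control on sub-critical balls and bounded mean oscillation on $\tilde Q_j$. The missing ingredient is an extension step: by Lemma~1 of \cite{BHS_schr-w}, $w\chi_{\tilde Q_j}$ extends to a global $A_p$ weight $w_j$ with constant controlled by the $A_p^{\rho,\text{loc}}$ constant of $w$, and correspondingly $b\chi_{\tilde Q_j}$ extends to $b_j\in BMO(\RR^d)$ with $\|b_j\|_{BMO}\lesssim[b]_{\text{loc}}$; only then can one apply the global theorem to $R_{b_j}(\chi_{\tilde Q_j}f)$ with operator norms uniform in $j$. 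Without this (or an equally quantitative ``local Calder\'on--Zygmund theory on a cube''), the uniformity in $j$ that you need before summing by bounded overlap is unjustified.

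Second, you have misidentified the operator. In Theorem~\ref{teo:clasiloc} the letter $R$ denotes the \emph{classical} Riesz transform (this is why the hypotheses mention only $\rho$ and not $V$, and why in \eqref{sumaR} the term $[(\rie_b^*)_{\text{loc}}-(R_b^*)_{\text{loc}}]$ is nonzero and is estimated via the comparison \eqref{kesyclas}). Its kernel is a genuine Calder\'on--Zygmund kernel, so your ``main technical obstacle (iii)'' is vacuous. Taken literally for the Schr\"odinger kernel $\ke$, your argument would in fact break: by Lemma~\ref{lem:chinos} the size bound $|\ke(x,y)|\lesssim|x-y|^{-d}$ (hence your claim $|\ke(x,y)|\lesssim|\tilde Q_j|^{-1}$ on the annulus) is only available for $q\ge d$, since for $d/2<q<d$ the term $\int_{B(y,|x-y|/4)}V(u)|u-y|^{1-d}\,du$ cannot be dropped; and a commutator theorem based only on the $L^s$-H\"ormander condition of Lemma~\ref{lem:hormander} yields boundedness for weights in $A_{p/s'}$, not in $A_p$, so that route could not give the stated conclusion anyway. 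Your remainder estimate itself (splitting $b(y)-b(x)$ through $b_{\tilde Q_j}$ and using H\"older, John--Nirenberg, and the openness of the local $A_p$ condition, Proposition~\ref{propo:Apmenosloc}) is sound and coincides with the paper's bound by $[b]_{\text{loc}}M_{s,\text{loc}}f$.
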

\begin{proof}
Let $\{Q_j\}_j$ be a covering by critical balls as in Proposition~\ref{propo:bol_critic}. It is possible to find a constant $\be$ such that if $\Qt_j=\be Q_j$ then $\cup_{x\in Q_j}B(x,\rho(x)) \subset \Qt_j$.

From Lemma~1 in \cite{BHS_schr-w} a weight in $A_p^{\rho,\text{loc}}$ when restricted to some $\Qt_j$ can be extended to $\RR^d$ as an $A_p$ weight preserving the $A_p^{\rho,\text{loc}}$ constant. This kind of result can be extended also to $BMO$ functions because of their well known relationship with $A_p$ weights \cite{MR1800316}. Therefore, given $b\in BMO_{\text{loc}}(\rho)$ and any $\Qt_j$, there is an extension of $b\chi_{\Qt_j}$ to the whole $\RR^d$ that we call $b_j$ belonging to $BMO$ and with norm bounded by $[b]_{\text{loc}}$, the natural norm in $BMO_{\text{loc}}(\rho)$.

For $x\in Q_j$, since $b=b_j$ on $\Qt_j$, we have
\begin{equation*}
|(R_b)_{\text{loc}} f(x)|\ \leq\  |(R_b)_{\text{loc}} f(x) - (R_b)(\chi_{\Qt_j}f)(x)| + |(R_{b_j})(\chi_{\Qt_j}f)(x)|.
\end{equation*}

The first term can be bounded as
\begin{equation}\label{paralaR}
\begin{split}
|(R_b)_{\text{loc}} f(x) - (R_b)(\chi_{\Qt_j}f)(x)|\ & \lesssim\ \int_{\Qt_j\setminus B(x,\rho(x))} \frac{|f(y)|\,|b_j(x)-b_j(y)|}{|x-y|^{d}}\, dy\\
& \lesssim\ [b]_{\text{loc}}\, M_{s,\text{loc}}(f)(x),
\end{split}
\end{equation}
for each $s>1$, with $M_{s,\text{loc}}f(x)=\sup_{B} \left(\frac{1}{|B|}\int_B |f|^s\right)^{1/s}$ where the sup is taken over sub-critical balls respect to the function $\be\rho$. Notice that to get the last inequality we made use of Remark~\ref{rem:bmoloc}. Then, since $w=w_j$ on $\Qt_j$,

\begin{equation*}
\begin{split}
\int_{\RR^d}|(R_b)_{\text{loc}} f|^pw\ \leq\  \sum_j \left( [b]_{\text{loc}} \int_{Q_j} |M_{s,\text{loc}}(f)|^p\,w + \int_{Q_j} |(R_{b_j})(\chi_{\Qt_j}f)|^p\,w_j\right).
\end{split}
\end{equation*}

By Proposition~\ref{propo:Apmenosloc} and the boundedness of $M_{1,\text{loc}}$ with $A_p^{\rho,\text{loc}}$ weights (see Theorem~1 in \cite{BHS_schr-w}), we obtain the desired estimate for the first term.

For the second term we use that, since $b_j$ belongs to $BMO$, the commutator $R_{b_j}$ is bounded on $L^p(w_j)$ where $w_j$ is an $A_p$ extension of $w\chi_{\Qt_j}$ to all $\RR^d$ with $A_p$ constant depending only on the $A_p^{\rho,\text{loc}}$ constant of the original weight $w$. We also notice that the operator norm of $R_{b_j}$ is independent of $j$.

\end{proof}

Now, give a technical Lemma that we will need in the proof of Theorem~\ref{teo:ow}.

\begin{lem}\label{lem:laclaim}
Let $\rho$ a function satisfying \eqref{rox_vs_roy} and $b\in BMO_\theta(\rho)$. Let $w$ verifying the reverse H\"older's inequality \eqref{reverse} for $q=\delta$ and $B$ any sub-critical ball. Then, given any $p,\nu >0$, there exists a constant $M>0$ such that
\begin{equation}\label{claim}
\begin{split}
\int_{B} w(x) \left(\int_{\la B}|b(x)-b(y)|^{\nu}\,dy\right)^{p/\nu}dx \ \lesssim\ \la^{M} [b]_{\theta}^p\, |B|^{p/\nu} w(B)
\end{split}
\end{equation}
for any sub-critical ball $B$ and all $\la\ge 1$.
\end{lem}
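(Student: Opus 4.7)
The plan is to decouple the $x$- and $y$-dependencies by pivoting through the average $b_{\la B}$. Using the elementary bounds $|b(x)-b(y)|^{\nu}\le 2^{\nu}(|b(x)-b_{\la B}|^{\nu}+|b(y)-b_{\la B}|^{\nu})$ together with $(a+b)^{p/\nu}\lesssim a^{p/\nu}+b^{p/\nu}$, valid for every $p/\nu>0$ with a constant depending only on $p/\nu$, the left-hand side of \eqref{claim} is controlled by $I_1+I_2$, where
\begin{equation*}
I_1=|\la B|^{p/\nu}\int_B w(x)\,|b(x)-b_{\la B}|^{p}\,dx,\qquad I_2=\left(\int_{\la B}|b(y)-b_{\la B}|^{\nu}\,dy\right)^{p/\nu} w(B).
\end{equation*}
It suffices to bound each of these by the right-hand side of \eqref{claim}.

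For $I_2$ I would apply Lemma~\ref{lem:JN} to $\la B$ with $k=1$ and Young function $\ff(t)=t^{\max(\nu,2)}$ (together with an ordinary H\"older step when $\nu<1$), plus the triangle correction $|b_{\la B}-b_{2\la B}|\lesssim [b]_{\theta}(1+\la r/\rho(x_0))^{\theta}$. This yields $\int_{\la B}|b-b_{\la B}|^{\nu}\lesssim |\la B|\,[b]_{\theta}^{\nu}(1+\la r/\rho(x_0))^{\theta''\nu}$ for some $\theta''$. Since $B$ is sub-critical ($r\le\rho(x_0)$) and $\la\ge 1$, one has $1+\la r/\rho(x_0)\le 2\la$, so raising to the $p/\nu$ power gives $I_2\lesssim \la^{M'}|B|^{p/\nu}[b]_{\theta}^{p}w(B)$ for suitable $M'=M'(d,p,\nu,\theta)$.

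For $I_1$ the reverse H\"older hypothesis on $w$ enters through ordinary H\"older with exponents $\de$ and $\de'$,
\begin{equation*}
\int_B w\,|b-b_{\la B}|^{p}\le \left(\int_B w^{\de}\right)^{1/\de}\left(\int_B|b-b_{\la B}|^{p\de'}\right)^{1/\de'},
\end{equation*}
whose first factor is $\lesssim |B|^{1/\de-1}w(B)$ by \eqref{reverse}. For the second factor, choose $k$ with $2^{k-1}\le\la\le 2^{k}$, apply Lemma~\ref{lem:JN} to $B$ with $\ff(t)=t^{p\de'}$ to get $\|b-b_{2^{k}B}\|_{L^{p\de'}(B)}\lesssim |B|^{1/(p\de')}\,k\,[b]_{\theta}(1+2^{k}r/\rho(x_0))^{\theta'}$, and correct from $b_{2^{k}B}$ to $b_{\la B}$ via $|b_{\la B}-b_{2^{k}B}|\lesssim [b]_{\theta}(1+\la r/\rho(x_0))^{\theta}$. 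Sub-criticality of $B$ again turns every $(1+\la r/\rho(x_0))$ into a constant multiple of $\la$, and $k\sim\log_{2}\la$ is absorbed into $\la$ at the price of enlarging an exponent. Combining and using $1/\de+1/\de'=1$ gives $I_1\lesssim \la^{M''}|B|^{p/\nu}[b]_{\theta}^{p}w(B)$.

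The main obstacle is essentially bookkeeping: tracking all the $\la$-powers and logarithmic factors and choosing $M$ large enough to absorb $dp/\nu$ (from $|\la B|^{p/\nu}$), the exponents produced by Lemma~\ref{lem:JN} and by the $BMO_{\theta}(\rho)$ definition, and the $\log\la$ coming from $k$. Sub-criticality of $B$ is essential here: it is what lets each $(1+\la r/\rho(x_0))^{\al}$ be replaced by $\la^{\al}$; without the assumption $r\le\rho(x_0)$ no uniform growth in $\la$ could be extracted on the right-hand side.
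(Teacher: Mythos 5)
Your proposal is correct and follows essentially the same route as the paper: the authors also pivot through $b_{\la B}$ to split the left-hand side into the two terms you call $I_1$ and $I_2$, bound $I_1$ by H\"older with exponents $\de,\de'$ combined with the reverse H\"older hypothesis on $w$ and Lemma~\ref{lem:JN}, bound $I_2$ directly by Lemma~\ref{lem:JN}, and then use sub-criticality of $B$ and $\la\ge 1$ to convert the factors $(1+\la r/\rho(x_0))^{\al}$ into powers of $\la$. Your treatment is in fact slightly more careful than the paper's in spelling out the passage from the dilation $\la B$ to a dyadic dilation $2^{k}B$ before invoking Lemma~\ref{lem:JN}.
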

\begin{proof}
Let $B=B(x_0,r)$ with $r\leq \rho(x_0)$. The left side of \eqref{claim} can be bounded by
\begin{equation}\label{2terms}
\begin{split}
\la^{dp/\nu}|B|^{p/\nu}\int_{B} w(x)\, |b(x)-b_{\la B}|^p\,dx\  +\  w(B) \left(\int_{\la B}|b(y)-b_{\la B}|^{\nu}\,dy\right)^{p/\nu}
\end{split}
\end{equation}
For the first term of the last expression we use H\"older's inequality with exponent $\delta$ and the assumption on $w$ and Lemma~\ref{lem:JN} to bound it by
\begin{equation*}
\begin{split}
\la^{dp/\nu}|B|^{p/\nu-1/\de'} w(B) & \left(\int_{\la B}|b(x)-b_{\la B}|^{p\de'}\,dx\right)^{1/\de'} \\ 
& \lesssim \ \la^{dp/\nu+d/\de'}|B|^{p/\nu} w(B)\, [b]_{\theta}^p \left(1 + \frac{r\la}{\rho(x_0)}\right)^{p\theta'}.
\end{split}
\end{equation*}
Using that $r\leq \rho(x_0)$ and  $\la\ge 1$, we arrive to the desired estimate with $M=d(p/\nu+1/\de'+\theta'p/d)$.

For the second term of \eqref{2terms} we use again Lemma~\ref{lem:JN} to get the bound 
\begin{equation*}
\la^{p/\nu} w(B)\, [b]_{\theta}^p\, \left(1 + \frac{r\la}{\rho(x_0)}\right)^{p\theta'} |B|^{p/\nu},
\end{equation*}
and the proof is finished proceeding as before.
\end{proof}

\begin{teo}\label{teo:Lp}
Let $V\in RH_q$ and $b\in BMO_\infty(\rho)$.
\begin{enumerate}[(i)]
\item \label{Lp-qd} If $q\ge d$, the operators $\rie_b$ and $\rie^*_b$ are bounded on $L^p(w)$, $1<p<\infty$, for $\displaystyle w\in A_{p}^{\rho,\infty}$.
\item \label{Lp-qdsobre2} If $d/2< q <d$, and $s$ is such that $\frac{1}{s}=\frac{1}{q}-\frac{1}{d}$, the operator $\rie_b^*$ is bounded on $L^p(w)$, for $s'<p<\infty$ and $\displaystyle w\in A_{p/s'}^{\rho,\infty}$ and hence by duality $\rie_b$ is bounded on $L^p(w)$, for $1<p<s$, with $w$ satisfying $w^{-\frac{1}{p-1}}\in A_{p'/s'}^{\rho,\infty}$.
\end{enumerate}
\end{teo}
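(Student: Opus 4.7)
The overall plan is to split $T_b f = (T_b)_{\text{loc}} f + (T_b)_{\text{glob}} f$ for $T\in\{\rie,\rie^*\}$. Since any $b\in BMO_\infty(\rho)$ belongs to $BMO_{\text{loc}}(\rho)$ and $A_p^{\rho,\infty}\subset A_p^{\rho,\text{loc}}$, the local piece is handled directly by Theorem~\ref{teo:clasiloc}. Thus the real work is the $L^p(w)$ control of $(T_b)_{\text{glob}}$.

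For the global piece I would employ the covering $\{Q_j\}=\{B(x_j,\rho(x_j))\}$ from Proposition~\ref{propo:bol_critic} and choose $\be$ large enough that $B(x,\rho(x))\subset\Qt_j=\be Q_j$ for every $x\in Q_j$. For $x\in Q_j$, inserting $b(x)-b(y)=(b(x)-b_{\Qt_j})+(b_{\Qt_j}-b(y))$ into the integrand decomposes $(T_b)_{\text{glob}} f(x)$ into $(b(x)-b_{\Qt_j})\,T_{\text{glob}} f(x)+T_{\text{glob}}((b_{\Qt_j}-b)f)(x)$. The first summand is handled by the $L^p(w)$-boundedness of $T_{\text{glob}}$ from \cite{BHS_schr-w} combined with an averaged H\"older step using \eqref{Holderfi}, where the Orlicz norm of $b-b_{\Qt_j}$ over $Q_j$ is controlled by Lemma~\ref{lem:JN} with $k=O(1)$. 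For the second summand I would decompose $B(x,\rho(x))^c$ into dyadic shells $2^k\Qt_j\setminus 2^{k-1}\Qt_j$, $k\ge 1$, and on each shell apply \eqref{Holderfi} with $\varphi(t)=e^t-1$; the Orlicz norm of $b-b_{2^k\Qt_j}$ costs a factor $\lesssim k\,2^{k\theta'}$ by Lemma~\ref{lem:JN}, while the size estimate \eqref{kes} of the kernel (with the $V$-term dropped via Lemma~\ref{lem:chinos} in case~(i)) supplies $2^{-kN}$. Choosing $N$ large renders the sum in $k$ convergent, and after taking $L^p(w)$-norms on $Q_j$ and summing over $j$ using the bounded overlap in Proposition~\ref{propo:bol_critic}, the estimate reduces to an $L^p(w)$-bound for an $M^\eta$-type maximal function, which holds thanks to $w\in A_p^{\rho,\infty}$.

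In case~(ii) the kernel of $\rie^*$ is no longer pointwise smooth, so the H\"older step on each shell must be performed in $L^s$ rather than $L^\infty$, and the regularity input is replaced by Lemma~\ref{lem:hormander}, whose auxiliary $(1+2^kr/\rho(x_0))^\theta$ factor was tailored precisely to absorb the $(1+2^k r/\rho)^{\theta'}$ cost arising from Lemma~\ref{lem:JN}. Losing one H\"older exponent to $s'$ is what forces the range $s'<p<\infty$ and the weight class $A_{p/s'}^{\rho,\infty}$. The dual statement for $\rie_b$ on $1<p<s$ then follows from the identity $(\rie_b)^*=-\rie^*_{\bar b}$ and the fact that the hypothesis $w^{-1/(p-1)}\in A_{p'/s'}^{\rho,\infty}$ is the Muckenhoupt-type dual of the primal condition. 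The chief obstacle throughout is bookkeeping the various $(1+r/\rho)$-powers coming from the $BMO_\infty$ norm, from the John--Nirenberg estimate in Lemma~\ref{lem:JN}, from the kernel decay, and (in case~(ii)) from Lemma~\ref{lem:hormander}, and verifying that the kernel-decay exponent $N$ can be chosen large enough to dominate their sum and restore geometric decay in $k$.
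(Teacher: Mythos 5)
Your two-term splitting $T_bf=(T_b)_{\text{loc}}f+(T_b)_{\text{glob}}f$ misses the central step of the paper's argument. Theorem~\ref{teo:clasiloc} is a statement about the \emph{classical} Riesz transform: its proof extends $b$ and $w$ from each critical ball to a global $BMO$ function and $A_p$ weight and then invokes the classical weighted commutator theorem for $R_{b_j}$. It does not apply to $(\rie_b)_{\text{loc}}$ or $(\rie^*_b)_{\text{loc}}$ --- when $q<d$ the Schr\"odinger--Riesz kernel is not pointwise smooth, and even when $q\ge d$ one still has to transfer the classical bound to the Schr\"odinger kernel. The paper therefore writes $\rie^*_bf=(R^*_b)_{\text{loc}}f+(\rie^*_b)_{\text{glob}}f+[(\rie^*_b)_{\text{loc}}-(R^*_b)_{\text{loc}}]f$ and devotes a substantial part of the proof to the third, comparison term, controlled through the kernel-difference estimate \eqref{kesyclas} on sub-critical balls; this produces the terms $h_1$ and $h_2$, which require a finer covering by balls of radius $2^{-k}\rho(x_j)$ with bounded overlap and again Lemma~\ref{lem:laclaim}. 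Your decomposition has no mechanism for the local piece of the Schr\"odinger operator itself, so this is a genuine gap rather than an alternative route.

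Two further points. First, in case (ii) the restriction $p>s'$ and the class $A_{p/s'}^{\rho,\infty}$ do not come from Lemma~\ref{lem:hormander}, which the paper uses only for the endpoint estimate of Theorem~\ref{teo:ow}; no regularity of the kernel enters the $L^p$ theorem at all. They come from the $V$-term in the size estimate \eqref{kes}: the global part contains $I_1(V\chi_{\Qt_j^k})$, which is placed in $L^s$ via \eqref{laI1}--\eqref{paralaV}, forcing a four-fold H\"older inequality with $\frac1p+\frac1s+\frac1\nu+\frac1\ga=1$. Second, your treatment of the summand $(b(x)-b_{\Qt_j})T_{\text{glob}}f(x)$ needs more than ``an averaged H\"older step'': pairing the exponential integrability of $b-b_{\Qt_j}$ against $T_{\text{glob}}f$ on $Q_j$ requires $T_{\text{glob}}$ to be bounded on $L^{r}(w^{r/p})$ for some $r>p$, which is not among the quoted ingredients. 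The paper sidesteps this by keeping $|b(x)-b(y)|$ intact and invoking Lemma~\ref{lem:laclaim}, which controls the joint oscillation in $x$ and $y$ with the weight present.
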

\begin{proof}
First of all, notice that there is no need to consider $q=d$ since in that case there exists an $\eps>0$ such that $V\in RH_{d+\eps}$.

We begin giving estimates for $\rie_b^*$.

Now we write\begin{equation}\label{sumaR}
(\rie_b^*)f = (R_b^*)_{\text{loc}}f\ +\ (\rie_b^*)_{\text{glob}}f\ +\ [(\rie_b^*)_{\text{loc}} - (R_b^*)_{\text{loc}}]f.
\end{equation}

As a consequence of Theorem~\ref{teo:clasiloc} the first term is bounded on $L^p(w)$ for $w\in A_{p}^{\rho,\text{loc}}$, $1<p<\infty$. Since $\displaystyle w\in A_{p}^{\rho,\infty}\subset A_{p}^{\rho,\text{loc}}$, $1\leq p<\infty$, and $\displaystyle w\in A_{p/s'}^{\rho,\infty}\subset A_{p}^{\rho,\text{loc}}$, $s'< p<\infty$, all the conclusions for $(R_b^*)_{\text{loc}}$ hold.

For the second term of \eqref{sumaR} we use \eqref{kes} to obtain
\begin{equation*}
 |(\rie^*_b)_{\text{glob}}f(x)|\ \leq\ \int_{B(x,\rho(x))^c} |b(y)-b(x)|\,|\ke^*(x,y)||f(y)|\,dy\ \lesssim\ g_1(x) + g_2(x),
\end{equation*}
with
\begin{equation*}
\begin{split}
 g_1(x) & = \sum_{k=0}^\infty 2^{-kN} g_{1,k}(x),
\end{split}
\end{equation*}
where $g_{1,k}(x)=\frac{1}{(2^k\rho(x))^{d}} \int_{B(x,2^k\rho(x))} |b(y)-b(x)|\,|f(y)|\,dy$, and
\begin{equation*}
 g_2(x) = \sum_{k=0}^\infty 2^{-kN} g_{2,k}(x).
\end{equation*}

\begin{equation*}
g_{2,k}(x)=\frac{1}{(2^k\rho(x))^{d-1}} \int_{B(x,2^k\rho(x))} \left(\int_{B(x,2^k\rho(x))} \frac{V(u)}{|u-y|^{d-1}}du\right) |b(y)-b(x)|\,|f(y)|\,dy
\end{equation*}

To deal with $g_1$, let $\sigma=c_0 2^{\frac{N_0}{N_0+1}}$, with $N_0$ and $c_0$ as in Proposition~\ref{propo:rho_xy}. Let $\{Q_j\}$ be the family given by Proposition~\ref{propo:bol_critic} and set $\Qt_j=\sigma Q_j$. Clearly, we have
\begin{equation}\label{contencion}
\cup_{x\in Q_j} B(x,\rho(x)) \subset \Qt_j.
\end{equation}
Denoting $\Qt_j^k=2^k\Qt_j$, then $2^kB_x \subset \Qt_j^k$ and $\rho(x)\simeq \rho(x_j)$, whenever $x\in Q_j$. Therefore, by H\"older's inequality with $\ga$ and $\nu$ such that $\frac{1}{p}+\frac{1}{\ga}+\frac{1}{\nu}=1$,
\begin{equation*}
\begin{split}
\int_{Q_j} (g_{1,k})^pw\ & \lesssim\ \int_{Q_j} \left(\frac{1}{|\Qt_j^k|}\int_{\Qt_j^k} |b(x)-b(y)|\,|f(y)| \,dy \right)^p w(x)\,dx\\
& \lesssim\ \frac{1}{|\Qt_j^k|^p} \left(\int_{\Qt_j^k} w^{-\ga/p}\right)^{p/\ga}  \left(\int_{\Qt_j^k} |f|^p\,w\right)\ \times\\
& \hspace{80pt} \int_{Q_j} w(x) \left(\int_{\Qt_j^k}|b(x)-b(y)|^{\nu}\,dy\right)^{p/\nu}dx,\\
& \lesssim\ 2^{kM} [b]_\theta^p w(\Qt_j^k) |\Qt_j^k|^{\frac{p}{\nu}-p} \left(\int_{\Qt_j^k} w^{-\ga/p}\right)^{p/\ga}  \int_{\Qt_j^k} |f|^p\,w\\
\end{split}
\end{equation*}
for some $M>0$, where in the last inequality we have used Lemma~\ref{lem:laclaim} for $\theta$ such that $b\in BMO^\theta(\rho)$.

From Proposition~\ref{propo:Apmenos} we can choose $\ga$ close enough to $p'$ in such a way that $w\in A_{1+p/\ga}^{\rho,\eta}$ for some $\eta>0$. Therefore, for some $M_1>0$, we get
\begin{equation*}
\begin{split}
\int_{Q_j} (g_{1,k})^pw\ & \lesssim\ 2^{k M_1} [b]_\theta^p \int_{\Qt_j^k} |f|^p\,w
\end{split}
\end{equation*}
and hence for $M_1'>0$,
\begin{equation*}
\begin{split}
\|g_1\|_{L^p(w)} & \lesssim \sum_k 2^{-kN} \|g_{1,k}\|_{L^p(w)}\\
& \lesssim \sum_k 2^{-kN} \left(\sum_j \int_{Q_j}g_{1,k}^p\,w\right)^{1/p}\\
& \lesssim [b]_\theta \sum_k 2^{k(-N+M_1')} \left(\sum_j \int_{\Qt_j^k} |f|^p\,w \right)^{1/p}\\
& \lesssim [b]_\theta \|f\|_{L^p(w)} \sum_k 2^{k(-N+M_1'+N_1)},
\end{split}
\end{equation*}
where in the last inequality is due to Proposition~\ref{propo:bol_critic}. Choosing $N$ large enough the last series is convergent.

Regarding $g_2$, according to Lemma~\ref{lem:chinos}, we only have to consider $\frac{d}{2}<q<d$. 

Observe that for $x\in Q_j$ we have
\begin{equation*}
\begin{split}
\int_{B(x,2^k\rho(x))} \frac{V(u)}{|u-y|^{d-1}}du\ \lesssim\ I_1(\chi_{\Qt_j^k}V)(y),
\end{split}
\end{equation*}
where $I_1$ is the classical Fractional Integral of order 1.

Therefore, by H\"older's inequality with $\ga$ and $\nu$ such that $\frac{1}{p}+\frac{1}{s}+\frac{1}{\nu}+\frac{1}{\ga}=1$,
\begin{equation*}
\begin{split}
\int_{Q_j} (g_{2,k})^pw\ & \lesssim\ \int_{Q_j} \left(\frac{1}{|\Qt_j^k|^{1-1/d}}\int_{\Qt_j^k} |b(x)-b(y)|\,|f(y)|\, I_1(V\chi_{\Qt_k^j})(y)\,dy \right)^p w(x)\,dx\\
& \lesssim\ \frac{1}{|\Qt_j^k|^{p(1-1/d)}} \left(\int_{\Qt_j^k} w^{-\ga/p}\right)^{p/\ga}  \|\chi_{\Qt_j^k}f\|_{L^p(w)}^p \|I_1(\chi_{\Qt_j^k} V)\|_{s}^p\ \times\\
& \hspace{80pt} \int_{Q_j} w(x) \left(\int_{\Qt_j^k}|b(x)-b(y)|^{\nu}\,dy\right)^{p/\nu}dx.
\end{split}
\end{equation*}

Recall that $V\in RH_q$ for some $q>1$ implies that $V$ satisfies the doubling condition, i.e., there exist constants $\mu\ge 1$ and $C$ such that

\begin{equation*}
\int_{tB} V \leq C\,t^{d\mu}\int_{B} V,
\end{equation*}
holds for every ball $B$ and $t>1$. Therefore, due to the boundedness of $I_1$ from $L^{q}$ into $L^{s}$, and the assumptions on $V$,
\begin{equation*}
\begin{split}
\|I_1(\chi_{\Qt_j^k} V)\|_{s}\ & \lesssim\ \|\chi_{\Qt_j^k} V\|_{q} \lesssim\ |\Qt_j^k|^{-1/q'} \int_{\Qt_j^k} V\\
& \lesssim\ 2^{kd\mu} |\Qt_j^k|^{-1/q'} \int_{\Qt_j} V \lesssim\ 2^{kd(\mu-1+\frac{2}{d})} |\Qt_j^k|^{\frac{1}{q}-\frac{2}{d}}
\end{split}
\end{equation*}
where the last inequality follows from the definition of $\rho$ (see \eqref{laro}). With this estimate and using the claim, we proceed as in the case of $g_1$, choosing this time $\ga$ such that $1+\frac{p}{\ga}$ is close enough to $\frac{p}{s'}$, to obtain
\begin{equation*}
\begin{split}
\int_{Q_j} (g_{2,k})^pw\ & \lesssim\ 2^{kM_2} \int_{\Qt_j^k} |f|^p\,w
\end{split}
\end{equation*}
for some $M_2$, leading to the desired estimate.

Now we have to deal with the term  $[(\rie^*_b)_{\text{loc}}- (R^*_b)_{\text{loc}}]f$ of \eqref{sumaR}. By using estimate \eqref{kesyclas}, we have
\begin{equation*}
|[(\rie^*_b)_{\text{loc}}- (R^*_b)_{\text{loc}}]f(x)| \ \lesssim\ h_1(x) + h_2(x)
\end{equation*}
where
\begin{equation*}
h_1(x)\ =\ \sum_k 2^{-k(2-d/q)}h_{1,k}(x),
\end{equation*}
with
\begin{equation*}
h_{1,k}(x)\ =\ 2^{kd}\rho(x)^{-d}\int_{B(x,2^{-k}\rho(x))} |f(y)|\,|b(x)-b(y)|\,dy
\end{equation*}
and
\begin{equation*}
h_2(x)\ \lesssim\ \sum_{k=0}^\infty 2^{k(d-1)} h_{2,k}(x),
\end{equation*}
where
\begin{equation*}
h_{2,k}(x) = \rho(x)^{-d+1}\!\int_{B(x,2^{-k}\rho(x))} |f(y)|\,|b(x)-b(y)|  \left(\int_{B(y,|x-y|/4)} \frac{V(u)}{|u-y|^{d-1}}du \right)\,dy.
\end{equation*}

Let us take a covering $\{Q_j\}$ as before. For each $j$ and $k$ there exist $2^{dk}$ balls of radio $2^{-k}\rho(x_j)$, $B_l^{j,k}=B(x_l^{j,k},2^{-k}\rho(x_j))$ such that $Q_j \subset \cup_{l=1}^{2^{dk}}B_l^{j,k} \subset 2Q_j$ and $\sum_{l=1}^{2^{dk}}\chi_{B_l^{j,k}}\leq 2^d$. Moreover, this construction can be done in a way that for each $k$ the family of a fixed dilation $\{\BB_l^{j,k}\}_{j,l}$ is a covering of $\RR^d$ such that
\begin{equation}\label{solapB}
\sum_j\sum_{l=1}^{2^{dk}}\chi_{\BB_l^{j,k}}\leq C,
\end{equation}
with the constant $C$ independent of $k$. To our purpose we take the dilation $\BB_l^{j,k}= 5 c_0 B_l^{j,k}$ (where $c_0$ appears in \eqref{rox_vs_roy}).

Observe that if $x\in B_l^{j,k}$, $B(x,2^{-k}\rho(x))\subset \Bt_l^{j,k}$ and  $\rho(x)\simeq \rho(x_j)$. Then
\begin{equation*}
\begin{split}
h_{1,k}(x)\ & \lesssim\ 2^{kd}\rho(x_j)^{-d} \int_{\Bt_l^{j,k}} |f(y)|\, |b(x)-b(y)|\,dy,
\end{split}
\end{equation*}

By H\"older's inequality with $\ga$ and $\nu$ as for $g_1$, and using Lemma~\ref{lem:laclaim} and Proposition~\ref{propo:Apmenos}, we have
\begin{equation*}
\begin{split}
\int_{B_l^{j,k}} (h_{1,k})^pw\ & \lesssim\ \int_{B_l^{j,k}} \left(2^{kd}\rho(x_j)^{-d} \int_{\Bt_l^{j,k}} |b(x)-b(y)|\,|f(y)| \,dy \right)^p w(x)\,dx\\
& \lesssim\ 2^{kdp}\rho(x_j)^{-dp} \left(\int_{\Bt_l^{j,k}} w^{-\ga/p}\right)^{p/\ga}  \left(\int_{\Bt_l^{j,k}} |f|^p\,w\right)\ \times\\
& \hspace{80pt} \int_{B_l^{j,k}} w(x) \left(\int_{\Bt_l^{j,k}}|b(x)-b(y)|^{\nu}\,dy\right)^{p/\nu}dx\\
& \lesssim\ [b]_\theta^p \int_{\Bt_l^{j,k}} |f|^p\,w.
\end{split}
\end{equation*}

Adding over $j$ and $l$, and using the bounded overlapping property \eqref{solapB},
\begin{equation*}
\|h_{1,k}\|_{L^p(w)}\ \lesssim\ [b]_\theta\, \|f\|_{L^p(w)},
\end{equation*}
and thus we obtain the desired estimate for $h_1$.

To deal with $h_2$, we use that $I_1$ is bounded from $L^q$ into $L^s$, together with Lemma~\ref{lem:casilema}, to get
\begin{equation}\label{I1V}
\begin{split}
\|I_1(\chi_{\Bt_l^{j,k}} V)\|_{s}\ & \lesssim \|\chi_{\Bt_l^{j,k}} V\|_{q}\\
& \lesssim\ |\Bt_l^{j,k}|^{-1+1/q} \int_{\Bt_l^{j,k}} V\\
& \lesssim\ \rho(x_j)^{-2+d/q}.
\end{split}
\end{equation}

Now, we proceed as for $h_1$ but this time we apply H\"older's inequality with $\ga$ and $\nu$ such that $\frac{1}{p}+\frac{1}{s}+\frac{1}{\nu}+\frac{1}{\ga}=1$,
\begin{equation*}
\begin{split}
\int_{B_l^{j,k}} (h_{2,k})^pw\ 
& \lesssim\ \frac{1}{\rho(x_j)^{p(d-1)}} \left(\int_{\Bt_l^{j,k}} w^{-\ga/p}\right)^{p/\ga}  \|\chi_{\Bt_l^{j,k}}f\|_{L^p(w)}^p \|I_1(\chi_{\Bt_l^{j,k}} V)\|_{s}^p\ \times\\
& \hspace{80pt} \int_{B_l^{j,k}} w(x) \left(\int_{\Bt_l^{j,k}}|b(x)-b(y)|^{\nu}\,dy\right)^{p/\nu}dx\\&
 \lesssim\ [b]_\theta^p\, 2^{-kpd(1-\frac{1}{q}+\frac{1}{d})} \|\chi_{\Bt_l^{j,k}}f\|_{L^p(w)}^p
\end{split}
\end{equation*}
Therefore, with the same argument as for $h_1$, and adding over $k$,
\begin{equation*}
\|h_{2,k}\|_{L^p(w)}\ \lesssim\ [b]_\theta\, \|f\|_{L^p(w)}.
\end{equation*}
and we finish the proof of the theorem.
\end{proof}

\section{An Orlicz weak estimate for the case $p=1$}\label{sec:debil}

In the next lemma we will use the notation $P(x,r)$ to denote the cube of center $x$ and side $2r$.

\begin{lem}\label{lem:CZdecom}
Let $\rho$ be a function satisfying \eqref{rox_vs_roy} and $\theta\ge 0$ fixed. Then for any $\la>0$ there exists an at most countable family of cubes $\{P_j\}$, $P_j=P(x_j,r_j)$ such that
\begin{equation}\label{CZcub}
\left(1 + \frac{r}{\rho(x_j)}\right)^{\theta} \la \ \leq \ \frac{1}{|P_j|}\int_{P_j}|f| \ \leq\ C\, \la\left(1 + \frac{r}{\rho(x_j)}\right)^{\sigma},
\end{equation}
for some $\sigma\ge \theta$, depending only on the constants appearing in \eqref{rox_vs_roy}, and
\item 
\begin{equation}\label{puntual}
|f(x)|\leq \la,\ \ \text{a.e.}\ \ \ x\notin\cup_{j}P_j.
\end{equation}
\end{lem}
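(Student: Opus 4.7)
The plan is to run a Calderón--Zygmund stopping-time argument on a fixed dyadic grid, with the stopping threshold replaced by the modified quantity $\lambda(1+r/\rho(x_P))^{\theta}$ rather than $\lambda$. Assuming $f\in L^1$ (if not, approximate), pick a large generation of dyadic cubes $Q$ of side $2^{N}$ so that $\frac{1}{|Q|}\int_Q|f|\leq \lambda$. Starting from these, subdivide dyadically and select a cube $P=P(x_P,r_P)$ the first time
\begin{equation*}
\frac{1}{|P|}\int_P|f| \ >\ \lambda\left(1+\frac{r_P}{\rho(x_P)}\right)^{\theta};
\end{equation*}
otherwise keep subdividing. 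This gives the pairwise disjoint selected family $\{P_j\}=\{P(x_j,r_j)\}$, and the left inequality in \eqref{CZcub} holds by construction.

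For the right inequality, let $\hat{P}_j$ be the dyadic parent of $P_j$, with half-side $2r_j$ and center $\hat{x}_j$ satisfying $|\hat{x}_j-x_j|\leq r_j$. Since $\hat{P}_j$ was not selected,
\begin{equation*}
\frac{1}{|P_j|}\int_{P_j}|f| \ \leq\ 2^d\,\frac{1}{|\hat{P}_j|}\int_{\hat{P}_j}|f| \ \leq\ 2^d\lambda\left(1+\frac{2r_j}{\rho(\hat{x}_j)}\right)^{\theta}.
\end{equation*}
Now the left-hand inequality in \eqref{rox_vs_roy} applied to the pair $(x_j,\hat{x}_j)$ gives $\rho(\hat{x}_j)\geq c_0^{-1}\rho(x_j)(1+r_j/\rho(x_j))^{-N_0}$, hence
\begin{equation*}
1+\frac{2r_j}{\rho(\hat{x}_j)} \ \lesssim\ \left(1+\frac{r_j}{\rho(x_j)}\right)^{N_0+1},
\end{equation*}
and \eqref{CZcub} follows with $\sigma=\theta(N_0+1)$.

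For \eqref{puntual}, take $x\notin \cup_j P_j$. Then $x$ lies in a decreasing chain of dyadic cubes $Q_n=P(x_n,r_n)$, none of which were selected, with $r_n\to 0$ and $|x_n-x|\leq r_n\to 0$. Consequently $\rho(x_n)$ stays bounded below by a positive multiple of $\rho(x)$ (again by \eqref{rox_vs_roy}), so $(1+r_n/\rho(x_n))^{\theta}\to 1$. At every Lebesgue point of $|f|$ outside $\cup_j P_j$ we thus get $|f(x)|=\lim_n \frac{1}{|Q_n|}\int_{Q_n}|f|\leq \lambda$, proving \eqref{puntual}.

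The only nonroutine step is the parent-to-child passage: we must ensure that replacing $\hat{x}_j$ by $x_j$ in the factor $(1+r/\rho(\cdot))^{\theta}$ costs only a polynomial (in $1+r_j/\rho(x_j)$) blow-up, and this is exactly what Proposition~\ref{propo:rho_xy} delivers. Everything else is the standard dyadic CZ selection, so the heart of the argument is tracking the exponent increase from $\theta$ to $\sigma=\theta(N_0+1)$.
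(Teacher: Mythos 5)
Your proof is correct and follows essentially the same route as the paper's: a Calder\'on--Zygmund stopping time with the threshold modified by the factor $(1+r/\rho)^{\theta}$, a parent-cube comparison combined with \eqref{rox_vs_roy} to get the upper bound in \eqref{CZcub}, and Lebesgue differentiation (with \eqref{rox_vs_roy} keeping $r_n/\rho(x_n)\to 0$) for \eqref{puntual}. The only differences are cosmetic: your exponent $\sigma=\theta(N_0+1)$ versus the paper's $N_0\theta$, and the distance between the centers of a dyadic cube and its parent is $\sqrt{d}\,r_j$ rather than $r_j$, which only affects constants.
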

\begin{proof}
First, let us observe that for any cube $P=P(x,r)$,
\begin{equation*}
\left(1 + \frac{r}{\rho(x)}\right)^{-\theta} \frac{1}{|Q|}\int_{Q}|f| \ \leq \ \frac{1}{|Q|}\int_{\RR^d}|f|,
\end{equation*}
and the right hand side tends to zero when $r$ goes to infinity. Therefore we may start the Calderon-Zygmund decomposition process with some $r_0$-grid such that
\begin{equation}\label{menorqla}
\left(1 + \frac{r_0}{\rho(z)}\right)^{-\theta} \frac{1}{|P(z,r_0)|}\int_{P(z,r_0)}|f| \ \leq \ \la,
\end{equation}
for any cube in the grid. We divide dyadically the cubes selecting those for which the average on the left turns greater than $\la$.

Continuing dividing those cubes that have not been selected we obtain a sequence of $P_j$ satisfying the left inequality of \eqref{CZcub}.

To check the other inequality, observe that if $P_j=P(x_j,r_j)$ was selected, then $P_j$ is contained in a cube $P(y,2r_j)$ satisfying \eqref{menorqla} for some $y$. Hence
\begin{equation*}
\frac{1}{|P_j|}\int_{P_j}|f| \ \lesssim \ \left(1 + \frac{2r}{\rho(y)}\right)^{\theta}\ \lesssim\ \left(1 + \frac{r_j}{\rho(x_j)}\right)^{N_0\theta},
\end{equation*}
where in the last inequality we used \eqref{rox_vs_roy}.

Next, if $x\notin \cup_j P_
j$ there exists a sequence of cubes containing $x$ and with radius tending to zero satisfying \eqref{menorqla}. Since $\rho$ is continuous and positive \eqref{puntual} follows from the Lebesgue's differentiation theorem.

\end{proof}

\begin{teo}\label{teo:ow}
Let $V\in RH_q$  and $b\in BMO_\infty(\rho)$.
\begin{enumerate}[(i)]
\item \label{weak-qd} If $q\ge d$ and $w\in A_1^{\rho,\infty}$, then there exists a constant $C$ such that for every $f\in \Lloc$ and $\la>0$,
\begin{equation}\label{tipod}
w(\{|\rie_bf|>\la\}) \leq C \int_{\RR^d} \frac{|f|}{\la}\left(1+\log^{+} \frac{|f|}{\la}\right) w.
\end{equation}

\item \label{weak-qdsobre2} If $d/2< q <d$ and $w^{s'}\in A_1^{\rho,\infty}$, with $\frac{1}{s}=\frac{1}{q}-\frac{1}{d}$, inequality \eqref{tipod} holds.
\end{enumerate}
\end{teo}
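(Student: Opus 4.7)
The plan is to adapt the Calder\'on-Zygmund scheme of P\'erez-Prado from \cite{PradoPerez-SharpCommut} to the present Schr\"odinger setting, using the local CZ decomposition of Lemma~\ref{lem:CZdecom} together with the kernel estimates of Section~\ref{sec:estK}. By homogeneity I would first rescale so that $\la=1$. Then I would apply Lemma~\ref{lem:CZdecom} to $|f|$ at level one, taking its parameter $\tta_0$ large, to be fixed later in terms of the $A_1^{\rho,\infty}$ index of $w$ (respectively of $w^{s'}$), the growth exponent $\sig$ of \eqref{CZcub}, and the BMO-growth $\tta'$ of Lemma~\ref{lem:JN}. This yields disjoint cubes $\{P_j\}$, $P_j=P(x_j,r_j)$, satisfying $(1+r_j/\rho(x_j))^{\tta_0}\leq f_{P_j}\leq C(1+r_j/\rho(x_j))^{\sig}$ and $|f|\leq 1$ a.e.\ on $\Om^c$, where $\Om=\cup_j P_j$. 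Split $f=g+h$ in the standard way, $g=f\chi_{\Om^c}+\sum_j f_{P_j}\chi_{P_j}$ and $h=\sum_j h_j$ with $h_j=(f-f_{P_j})\chi_{P_j}$ of mean zero, and bound $w(\{|\rie_b f|>1\})\leq w(\{|\rie_b g|>1/2\})+w(\{|\rie_b h|>1/2\})$.

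For the \emph{good part}, Theorem~\ref{teo:Lp} provides $L^p(w)$-boundedness of $\rie_b$ for a suitable $p>1$ close to one (in case (i) any $p>1$, since $A_1^{\rho,\infty}\subset A_p^{\rho,\infty}$; in case (ii) the assumption $w^{s'}\in A_1^{\rho,\infty}$ combined with H\"older gives $w\in A_1^{\rho,\infty}$, and one picks $p$ in the range of Theorem~\ref{teo:Lp}(ii)). Chebyshev reduces matters to $\int|g|^p w\lesssim\int|f|w$. The $\Om^c$-contribution is immediate because $|f|\leq 1$ there, so $\int_{\Om^c}|f|^pw\leq\int|f|w$. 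For each $P_j$, write $f_{P_j}^p w(P_j)=f_{P_j}^{p-1}\cdot f_{P_j}w(P_j)$, bound $f_{P_j}^{p-1}\lesssim(1+r_j/\rho(x_j))^{\sig(p-1)}$, apply the $A_1^{\rho,\infty}$ estimate $w(P_j)/|P_j|\lesssim(1+r_j/\rho(x_j))^{\tta_w}\inf_{P_j}w$, and use the CZ lower bound $|P_j|(1+r_j/\rho(x_j))^{\tta_0}\leq\int_{P_j}|f|$ together with $\inf_{P_j}w\cdot\int_{P_j}|f|\leq\int_{P_j}|f|w$, to get $f_{P_j}^p w(P_j)\lesssim(1+r_j/\rho(x_j))^{\sig(p-1)+\tta_w-\tta_0}\int_{P_j}|f|w$. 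Choosing $p$ close enough to $1$ (so $\sig(p-1)$ is small) and then $\tta_0\geq\sig(p-1)+\tta_w$ makes the prefactor $\leq 1$; summing over the disjoint $\{P_j\}$ concludes the good-part estimate.

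For the \emph{bad part}, set $\Omt=\cup_j 2P_j$. The same CZ lower bound combined with $A_1^{\rho,\infty}$ of $w$ (and in case (ii) a preliminary H\"older step using $w^{s'}$) gives $w(\Omt)\lesssim\int|f|w$, which is controlled by the right-hand side of \eqref{tipod}. For $x\in\Omt^c$ write
\begin{equation*}
\rie_b h_j(x)=\rie((b-b_{P_j})h_j)(x)-(b(x)-b_{P_j})\rie h_j(x),
\end{equation*}
and use the mean-zero of $h_j$ to replace each kernel $\ke(x,y)$ by $\ke(x,y)-\ke(x,x_j)$ (absorbing the correction term $\ke(x,x_j)\int(b-b_{P_j})h_j$ by an Orlicz bound, see below). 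Decompose $\Omt^c$ into dyadic annuli $A_k=\{|x-x_j|\sim 2^kr_j\}$, $k\geq 1$. On each $A_k$, integrate in $x$ with H\"older in the pair $(s,s')$: the $L^s(A_k)$-norm of the kernel difference is controlled by Lemma~\ref{lem:hormander}, and $\|w\chi_{A_k}\|_{s'}$ by $w^{s'}\in A_1^{\rho,\infty}$ (in case (i) one may instead take $s=\infty$ via the pointwise smoothness in \eqref{kesdif} after dropping the $V$-term by Lemma~\ref{lem:chinos}(v)). The factor $|b(x)-b_{P_j}|$ on $A_k\subset 2^{k+1}P_j$ is treated by the generalised H\"older inequality \eqref{Holderfi} with $\ff(t)=e^t-1$ and $\fft(t)\simeq t\log(1+t)$, using the bound $\|b-b_{2^{k+1}P_j}\|_{\ff,2^{k+1}P_j}\leq Ck\,[b]_{\tta}(1+2^kr_j/\rho(x_j))^{\tta'}$ from Lemma~\ref{lem:JN}. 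Enlarging the free parameter in Lemma~\ref{lem:hormander} absorbs the $k(1+2^kr_j/\rho(x_j))^{\tta'}$ loss, so the $k$-sum converges, and what remains is a main term of the form $\sum_j|P_j|\inf_{P_j}w\cdot\|h_j/|P_j|\|_{\fft,P_j}$. Because $\fft(t)\simeq t(1+\log^+t)$ and $|h_j|\leq|f|+f_{P_j}$, the Luxemburg norm $\|h_j/|P_j|\|_{\fft,P_j}$ is dominated by $\frac{1}{|P_j|}\int_{P_j}|f|(1+\log^+|f|)$; combined with $|P_j|\inf_{P_j}w\leq w(P_j)$ (and a residual $(1+r_j/\rho(x_j))$-power absorbed by $\tta_0$) this gives exactly the right-hand side of \eqref{tipod}.

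The main obstacle is the endpoint Orlicz step in the bad part: one must combine Lemma~\ref{lem:hormander}, H\"older with $(s,s')$ against $w$ (or $w^{s'}$), and the John-Nirenberg type Lemma~\ref{lem:JN} in such a way that the $k(1+2^kr_j/\rho(x_j))^{\tta'}$ loss from the BMO side \emph{and} the $(1+r_j/\rho(x_j))^{\tta_w}$ loss from the weight are simultaneously absorbed by the decay of Lemma~\ref{lem:hormander}, while still leaving room to translate the resulting local $\fft$-average of $|h_j|$ into the global $L\log L$ norm of $|f|$. Arranging a single choice of $\tta_0$ in the CZ decomposition that works uniformly in all three places — good part, $w(\Omt)$, and each dyadic annulus in the tail — together with compatible choices of $p$ (close to one) and of the Hörmander parameter is the most delicate bookkeeping in the argument.
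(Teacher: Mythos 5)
Your overall architecture (the Calder\'on--Zygmund decomposition of Lemma~\ref{lem:CZdecom}, a good/bad splitting, the H\"ormander condition of Lemma~\ref{lem:hormander} paired with H\"older in $(s,s'\nu,\ga)$ against the weight, and the $\exp L$ / $L\log L$ duality via Lemma~\ref{lem:JN}) is the right one, but two steps fail as written, and they are precisely the places where the paper has to introduce extra structure. First, you put the averages $f_{P_j}\chi_{P_j}$ into the good function $g$ for \emph{all} cubes and then invoke Chebyshev plus $L^p(w)$-boundedness. On a supercritical cube ($r_j>\rho(x_j)$) this gives $f_{P_j}^p\,w(P_j)\lesssim f_{P_j}^{p-1}\cdot\frac{w(P_j)}{|P_j|}\int_{P_j}|f|\lesssim(1+r_j/\rho(x_j))^{\sigma(p-1)+\theta_w}\int_{P_j}|f|\,w$; the factor $(1+r_j/\rho(x_j))^{-\theta_0}$ you claim does not materialize, because $\int_{P_j}|f|$ is already spent once in producing the target $\int_{P_j}|f|\,w$ and cannot simultaneously be traded against $|P_j|$ through the lower inequality in \eqref{CZcub}. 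Since $\theta_w>0$ is fixed by the weight and $(1+r_j/\rho(x_j))$ is unbounded over supercritical cubes, the good-part estimate does not close. The paper avoids this by setting $g=0$ on the cubes with $r_j>\rho(x_j)$ and treating $h'=f\chi_{\Omega_2}$ as a \emph{third} piece, estimated only outside $\tilde\Omega$ by the size estimate of Lemma~\ref{lem:kcor}, whose decay $(\rho(x_j)/2^kr_j)^{N-\mu d}$ is available exactly because those cubes are supercritical.

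Second, in the bad part the term $\rie\bigl((b-b_{P_j})h_j\bigr)$ cannot be handled by subtracting $\ke(x,x_j)$: the integrand $(b-b_{P_j})h_j$ is not mean-zero, and the resulting correction term forces you to control $\sum_k\int_{|x-x_j|\sim2^kr_j}|\ke(x,x_j)|\,w(x)\,dx$, which behaves like $\log(2+\rho(x_j)/r_j)\,\inf_{P_j}w$ since every subcritical annulus contributes a fixed amount (just as the absolute integral of the classical Riesz kernel off a cube diverges). This is not a loss that enlarging the H\"ormander parameter or an ``Orlicz bound'' can absorb. The paper's resolution is a genuinely different ingredient: it builds the majorant $u=(M^{\sigma}(w\chi_{\tilde\Omega^c})^{s'\nu})^{1/(s'\nu)}$, shows via Lemma~\ref{lem:MA1} and Remark~\ref{rem:delA1} that $u^{s'}\in A_1^{\rho,\infty}$ with $w\chi_{\tilde\Omega^c}\le u\lesssim w$, and applies the weighted weak type $(1,1)$ of $\rie$ with the weight $u$ (Theorem~3 of \cite{BHS_schr-w}); only the remaining piece $(b(x)-b_{P_j})\rie h_j(x)$ is treated with Lemma~\ref{lem:hormander} as you propose. (Your direct $s=\infty$ treatment of case (i) is a legitimate alternative to the paper's reduction of (i) to (ii) via Lemma~\ref{lem:p1} and the self-improvement of $RH_q$, but both gaps above are present in either case.)
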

\begin{proof}
First, let us observe that \eqref{weak-qd} can be deduced from \eqref{weak-qdsobre2}. In fact, if $w\in A_1^{\rho,\infty}$ there exist $\ga_0>1$ such that $w^\ga\in A_1^{\rho,\infty}$, for $1\leq\ga\leq\ga_0$, according to Lemma~5. Oh the other hand if $V\in RH_q$ for $q\ge d$ it certainly belongs to $RH_s$ for any $s<d$. In particular we may choose $\frac{d}{2}<s<d$ and $\ga>\ga_0$ such that $1-\frac{1}{\ga}=\frac{1}{s}-\frac{1}{d}$, to get the desired estimate.

Assume then $V\in RH_q$, $\frac{d}{2}<q<d$. Let $w$ be such that $w^{s'}\in A_1^{\rho,\infty}$ and therefore $w^{s'}\in A_1^{\rho,\be}$, for some $\be\ge 0$. In this case it is also true that $w\in A_1^{\rho,\theta}$ with $\theta=\be/s'$.

Given $f\in L^1$, let us consider $P_j=P(x_j,r_j)$ the Calder\'on-Zigmund decomposition given in Lemma~\ref{lem:CZdecom} associated to $\theta$. We define the set of indexes
\begin{equation*}
J_1=\{j:\ r_j\leq \rho(x_j)\},\ \ J_2 = \{j:\ r_j> \rho(x_j)\},
\end{equation*}
and
\begin{equation*}
\Omega_1=\cup_{j\in J_1} P_j,\ \ \Omega_2=\cup_{j\in J_2} P_j.
\end{equation*}

Now we split $f=g+h+h'$, as
\begin{equation*}
g(x)=\begin{cases}
\frac{1}{|P_j|}\int_{P_j} f, & \text{if}\ x\in P_j,\ j\in J_1,\\
0, & \text{if}\ x\in P_j,\ j\in J_2,\\
f(x), & \text{if}\ x\notin \Omega,
\end{cases}
\end{equation*}
with $\Omega=\Omega_1 \cup \Omega_2$,
\begin{equation*}
h(x)=\begin{cases}
f(x)-\frac{1}{|P_j|}\int_{P_j} f, & \text{if}\ x\in P_j,\ j\in J_1,\\
0, & \text{otherwise,}
\end{cases}
\end{equation*}
and therefore $h'(x)=\chi_{\Omega_2}f$.

Let $\Pt_j=P_j(x_j,2r_j)$ and $\Omt = \cup_{j}\Pt_j$. Now,
\begin{equation}\label{wset}
w(\{x:\ |\rie_b f|(x)>\la\} \ \leq \ w(\Omt) + w(\{x\notin \Omt :\ |\rie_b f|(x)>\la\}.
\end{equation}

The first term of the last expression, can be controlled using \eqref{CZcub} and that $w\in A_1^{\rho,\theta}$ (see Remark~\ref{rem:Apcubos}), as
\begin{equation}
\begin{split}
w(\Omt)\ & \leq\ \sum_j w(\Pt_j)\ \lesssim\ \frac{1}{\la} \sum_j \frac{w(\Pt_j)}{|\Pt_j|} \left(1+\frac{r_j}{\rho(x_j)}\right)^{-\theta}\int_{P_j}|f| \\ 
& \lesssim\ \frac{1}{\la} \sum_j \inf_{P_j}w  \int_{P_j} |f|  \ \lesssim\ \frac{1}{\la}\int_{\RR^d}|f|\,w.
\end{split}
\end{equation}

For the second term of \eqref{wset}, it is enough to arrive to the right hand side of inequality \eqref{tipod} estimating $II_1\ =\ w(\{x:\ |\rie_b g(x)|>\la\})$, $II_2\ =\ w(\{x\notin\Omt:\ |\rie_b h(x)|>\la\})$ and $II_3\ = \ w(\{x\notin\Omt:\ |\rie_b h'(x)|>\la\})$.

To deal with $II_1$ notice that, from Lemma~\ref{lem:CZdecom} it follows that $|g|\leq \la$. On the other hand, from Theorem~\ref{teo:Lp} it follows that $\rie_b$ is bounded on $L^p(w)$ for some $p$ close enough to one.

In fact, from $w^{s'}\in A_1^{\rho,\infty}$ we get $w^{s'\nu}\in A_1^{\rho,\infty}$ for some $\nu>1$ (see Lemma~\ref{propo:Apmenos}) and taking $p$ such that $p(1-s')+s'=\frac{1}{\nu}$ it is easy to check that $w^{-\frac{1}{p-1}}\in A_{p'/s'}^{\rho,\infty}$. Therefore, since strong type implies weak type $(p,p)$, we get
\begin{equation}\label{II1}
\begin{split}
w(\{x:\ |\rie_b g(x)|>\la\}) \ \lesssim\ \frac{1}{\la^p}\int_{\RR^d}|g|^p\,w\ \lesssim \frac{1}{\la} \left(\sum_{j\in J_1} \frac{w(P_j)}{|P_j|}\int_{P_j}|f| + \int_{\Omega^c}|f|\,w\,\right).
\end{split}
\end{equation}
Since $w\in A_1^{\rho,\infty}$ and for $j\in J_1$, $r_j\leq \rho(x_j)$ we have $\frac{w(P_j)}{P_j}\lesssim \inf_{P_j} w$, and hence the last expression in \eqref{II1} can be easily bounded by $\frac{1}{\la}\int_{\RR^d}|f|\,w$.

To take care of $II_2$ we observe that
\begin{equation*}
\rie_b h(x)\ =\ \int_{\RR^d} \ke(x,y) [b(x)-b(y)] h(y)\, dy\ = \ \sum_{j\in J_1} \int_{P_j} \ke(x,y) [b(x)-b(y)] h(y)\, dy.
\end{equation*}
Adding and subtracting $b_{P_j}$ inside the integral we write
\begin{equation*}
\rie_b h(x)\ = \ - A(x) + B(x),
\end{equation*}
where
\begin{equation*}
A(x)\ = \sum_{j\in J_1} \int_{P_j} \ke(x,y) [b(y)-b_{P_j}] h(y)\, dy,
\end{equation*}
and
\begin{equation*}
B(x)\ = \sum_{j\in J_1} \int_{P_j} \ke(x,y) [b(x)-b_{P_j}] h(y)\, dy.
\end{equation*}

So we need to estimate
\begin{equation*}
II_{2,1}=w(\{x\notin\Omt:\ |A(x)|>\la\}\ \ \text{and}\ \ \ II_{2,2}=w(\{x\notin\Omt:\ |B(x)|>\la\}
\end{equation*}

To deal with the first expression let $\nu>1$ be such that $w^{s'\nu}\in A_1^{\rho,\infty}$. Hence, according to Remark~\ref{rem:delA1} there exists $\sigma\ge 0$ such that 
\begin{equation}\label{esA1}
M^\sigma(w^{s'\nu})\ \lesssim\ w^{s'\nu}.
\end{equation}

We set $w_* = w\chi_{\Omt^c}$ and since $w_*^{s'\nu}\in \Lloc$ we may apply Lemma~\ref{lem:MA1} for $g=w_*^{s'\nu}$, $\theta=\sigma$ and $\de=1/\nu$ to get that the weight $u=(M^{\sig}w_*^{s'\nu})^{\frac{1}{s'\nu}}$ is such that $u^{s'}$ belongs to $A_1^{\rho,\infty}$. Also, from differentiation, $w_* \leq u$ and from \eqref{esA1} $u\lesssim w$. Moreover, notice that for $y,\, z \in P_j$, $j\in J_1$ we have $u(x) \simeq u(y)$. This is due to the facts that $w_* = 0$ in $P_j$ and that $\rho(x)\simeq\rho(y)$.

Then since $A(x)=-\rie (\sum_{j\in J_1} (b-b_{P_j}) \chi_{P_j} h)(x)$ and $u^{s'}\in A_1^{\rho,\infty}$ (see Theorem~3 in \cite{BHS_schr-w}),
\begin{equation*}
\begin{split}
II_{2,1} \ &  = \ w_*(\{x:\ |A(x)|>\la \})\\
& \lesssim\ u(\{x:\ |A(x)|>\la \})\\
& \lesssim\ \frac{1}{\la} \sum_{j\in J_1} \int_{P_j} [b(y)-b_{P_j}] |h(y)| u(y)\, dy\\
& \lesssim\ \frac{1}{\la} \sum_{j\in J_1} \inf_{P_j} u \int_{P_j} [b(y)-b_{P_j}] |f(y)|\, dy\\
& \hspace{20pt} + \frac{1}{\la} \sum_{j\in J_1} \inf_{P_j} u \frac{1}{|P_j|}\int_{P_j} [b(y)-b_{P_j}] \int_{P_j} |f(y)|\, dy.
\end{split}
\end{equation*}
Clearly, the last sum is controlled by $[b]_\theta\,\|fw\|_1$ since $u\leq w$. For the first term we apply H\"older's inequality \eqref{Holderfi} and Lemma~\ref{lem:JN},
\begin{equation*}
II_{2,1}\ \lesssim \ \frac{1}{\la} [b]_\theta \sum_{j\in J_1} \inf_{P_j} u |P_j| \|f\|_{\varphi,P_j}.
\end{equation*}

We remind that from \cite{MR0126722}, p. 92, we get that for any cube $Q$
\begin{equation*}
\|f\|_{\varphi,Q} \simeq \inf_{t>0}\left\{t+\frac{t}{|Q|}\int_Q \varphi\left(\frac{|f|}{t}\right)\right\}.
\end{equation*}

Now, taking $t=\la$,
\begin{equation}\label{conphi}
\frac{|P_j|}{\la}\|f\|_{\varphi,P_j} \ \lesssim\ |P_j|+\int_{P_j}\varphi\left(\frac{|f|}{\la}\right).
\end{equation}

But since $P_j$ satisfies \eqref{CZcub}, we have
\begin{equation}\label{condcub}
|P_j|\lesssim \frac{1}{\la} \int_{P_j} |f|.
\end{equation}

Inserting these estimates an using again that $u\lesssim w$ it follows
\begin{equation*}
II_{2,1}\ \lesssim\ [b]_\theta\ \left(\int_{\RR^d} \frac{|f|}{\la} w +  \int_{\RR^d} \varphi\left(\frac{|f|}{\la}\right) w \right).
\end{equation*}

For $II_{2,2}$ we apply Tchebycheff inequality to get
\begin{equation*}
\begin{split}
II_{2,2} \ & \lesssim\ \frac{1}{\la} \int_{\Omt^c} |B|w\\
& \lesssim\ \frac{1}{\la} \sum_{j\in J_1} \int_{\Pt_j^c} |b(x)-b_{P_j}| \left(\int_{P_j} |\ke(x,y)-\ke(x,x_j)|\,|h(y)|\, dy\right) w(x)\,dx \\
& \lesssim\ \frac{1}{\la} \sum_{j\in J_1} \int_{P_j} |h(y)| \left(\int_{\Pt_j^c} |b(x)-b_{P_j}|\,|\ke(x,y)-\ke(x,x_j)|\,w(x)\,dx\right) dy.
\end{split}
\end{equation*}
The inner integrals may be estimate splitting into annuli and applying H\"older's inequality with $s$, $s'\nu$, $\ga$ with $\nu>1$ such that $w^{s'\nu}\in A_1^{\rho,\infty}$ and $\frac{1}{s} + \frac{1}{s'\nu} + \frac{1}{\ga} = 1$. In this way, setting $P_j^k=2^kP_j$ we have
\begin{equation}\label{cruz}
\begin{split}
\int_{\Pt_j^c} & |b(x)-b_{P_j}|\,|\ke(x,y)-\ke(x,x_j)|\,w(x)\,dx \\
& \lesssim \ \sum_{k=2}^\infty \left(\int_{P_j^k} |b(x)-b_{P_j}|^{\ga}\,dx\right)^{1/\ga}\\
& \hspace{40pt}\times\left(\int_{P_j^k\setminus P_j^{k-1}} |\ke(x,y)-\ke(x,x_j)|^s\,dx\right)^{1/s} \left(\int_{P_j^k}w^{s'\nu}\right)^{\frac{1}{s'\nu}}.
\end{split}
\end{equation}

Next, observe that if $b\in BMO^\theta_\infty(\rho)$, using Lemma~\ref{lem:JN}, for some $\eta\ge \theta$ we have
\begin{equation*}
\begin{split}
& \left(\int_{P_j^k} |b(x)-b_{P_j}|^\ga\,dx\right)^{1/\ga} \\ 
& \hspace{30pt} \lesssim \ \left(\int_{P_j^k} |b(x)-b_{P_j^k}|^{\ga}\,dx\right)^{1/\ga} + |P_j^k|^{1/\ga} \sum_{i=0}^{k-1} \frac{1}{|P_j^i|} \int_{P_j^i} |b(x)-b_{P_j^i}|\\
& \hspace{30pt} \lesssim \ [b]_\theta |P_j^k|^{1/\ga} \left[ \left(1 + \frac{2^kr_j}{\rho(x_j)}\right)^{\eta} + \sum_{i=0}^{k-1} \left(1 + \frac{2^ir_j}{\rho(x_j)}\right)^{\theta}\right]\\
& \hspace{30pt} \lesssim \ k [b]_\theta |P_j^k|^{1/\ga} \left(1 + \frac{2^kr_j}{\rho(x_j)}\right)^{\eta}.
\end{split}
\end{equation*}
Also, since $w^{s'\nu}\in A_1^{\rho,\infty}$, for some $\sigma>0$ we have
\begin{equation}\label{punto}
\left(\int_{P_j^k}w^{s'\nu}\right)^{\frac{1}{s'\nu}} \ \lesssim \ \inf_{P_j}w\, |P_j^k|^{1/s'\nu} \left(1 + \frac{2^kr_j}{\rho(x_j)}\right)^{\sigma}.
\end{equation}

Therefore, since $\frac{1}{s'\nu} + \frac{1}{\ga} = \frac{1}{s'}$ and $P_j\subset P_j^k$, the right hand side of \eqref{cruz} can be bounded by a constant times 
\begin{equation*}
\begin{split}
[b]_\theta \inf_{P_j}w \sum_{k=2}^{\infty} k(2^kr_j)^{d/s'} \left(1 + \frac{2^ir_j}{\rho(x_j)}\right)^{\eta+\sigma}\left(\int_{P_j^k\setminus P_j^{k-1}} |\ke(x,y)-\ke(x,x_j)|^s\,dx\right)^{1/s}
\end{split}
\end{equation*}
but for $P_j$, we have $|y-x_j|<r_j$ so we may apply H\"ormander's type condition of Lemma~\ref{lem:hormander}. Therefore,
\begin{equation*}
\begin{split}
II_{2,2}\ \lesssim\ \frac{[b]_\theta}{\la} \sum_{j\in J_1} \inf_{P_j}w \int_{P_j} |h|\ \lesssim\ \frac{1}{\la} \sum_{j\in J_1} \int_{P_j}|f|w \ \lesssim\ \frac{1}{\la} \int_{\RR^d} |f|w.
\end{split}
\end{equation*}

Finally, we take care of $III$ which involves $h'=f\chi_{\Om_2}$. By Tchebycheff inequality and proceeding as for $II_{2,2}$,
\begin{equation*}
\begin{split}
III\ \lesssim\ \frac{1}{\la} \sum_{j\in J_2} \int_{P_j} |f(y)| \left(\int_{\Pt_j^c} |b(x)-b_{P_j}|\,|\ke(x,y)|\,w(x)\,dx\right) dy.
\end{split}
\end{equation*}

Now, for each $j\in J_2$ we bound the inner integral splitting into annuli and applying H\"older's inequality as in \eqref{cruz}. With the same notation there we get using Lemma~\ref{lem:kcor},
\begin{equation}\label{40}
\begin{split}
\int_{\Pt_j^c} & |b(x)-b(y)|\,|\ke(x,y)|\,w(x)\,dx \\
& \lesssim \ \sum_{k=2}^\infty (2^kr_j)^{-1-\frac{d}{q'}} \left(\frac{\rho(x_j)}{2^kr_j}\right)^{N-\mu d} \left(\int_{P_j^k} |b(x)-b(y)|^{\ga}\,dx\right)^{1/\ga} \left(\int_{P_j^k}w^{s'\nu}\right)^{\frac{1}{s'\nu}}.
\end{split}
\end{equation}
For the factor with $w$ we use estimate \eqref{punto}, and the one concerning $b$ we can add and subtract $b_{P_j^k}$ to obtain
\begin{equation*}
\begin{split}
& \left(\int_{P_j^k} |b(x)-b_{P_j}|\,dx\right)^{1/s} \ \lesssim \ (2^kr_j)^{-1-\frac{d}{q'}} \left[[b]_\theta\left(\frac{2^kr_j}{\rho(x_j)}\right)^\eta + |b_{P_j^k} - b(y)|\right].
\end{split}
\end{equation*}

Collecting estimates, setting $\al=N-\mu d -\eta-\sig$ and using that $r_j\ge \rho(x_j)$ for $j\in J_2$, we get
\begin{equation}\label{ultima}
\begin{split}
& III\ \lesssim \\
&\ \ \frac{1}{\la} \sum_k 2^{-k\al} \sum_{j\in J_2} \left(\frac{r_j}{\rho(x_j)}\right)^{\al} \inf_{P_j} w \int_{P_j} |f(y)| \left[[b]_\theta\left(\frac{2^kr_j}{\rho(x_j)}\right)^\eta + |b_{P_j^k} - b(y)|\right]\,dy.
\end{split}
\end{equation}

For the term with $[b]_\theta$ choosing $N$ such that $N-\mu d-\eta-\sig>0$ and using that $r_j\ge \rho(x_j)$ for $j\in J_2$, to obtain that it is bounded by a constant times $\frac{1}{\la}\int f w$.

For the other term we apply as before H\"older with $\ff$ and $\fft$ to get
\begin{equation*}
\int_{P_j} |f(y)| |b_{P_j^k} - b(y)| \,dy\ \lesssim\ |P_j|\, \|f(y)\|_{\ff,P_j}\, \|b_{P_j^k} - b\|_{\tilde{\ff},P_j}.
\end{equation*}

Then, we apply Lemma~\ref{lem:JN} to bound the last factor. For the first factor we use \eqref{conphi} and \eqref{condcub}. Therefore, choosing $N$ large enough in \eqref{ultima} such that $N-\mu d-\eta-\sig-M>0$ we obtain that expression bounded by
\begin{equation*}
\int_{P_j} \ff\left(\frac{f}{\la}\right)\,w.
\end{equation*}

\end{proof}

\begin{rem}
We want to point out that inequality \eqref{tipod} is also true for $\rie^*_b$ with weights in $A_1^{\rho,\infty}$ provided the potential $V$ belongs to $RH_d$. On the other hand, when $V\in RH_q$ for some $q>d/2$ but $V\notin RH_d$, we can not expect this kind of result for $\rie^*_b$ since $\rie^*$ is not of weak type $(1,1)$ for $w=1$ (see \cite{shen}). Therefore, in order to get \eqref{tipod} for $\rie^*_b$ when $V\in RH_d$ we can not argue as we did for $\rie_b$ in that case. Nevertheless, a close look at the proof of the case $q<d$ reveals that the same pattern could be followed in this case.

In fact, notice that the only instances in the argument where we use properties of $\rie_b$, $\rie$ or of the kernel $\ke$ are the following:
\begin{enumerate}[(i)]
\item \label{st} Strong type $(p,p)$ of $\rie_b$ with the weight $w$ for some $p>1$ (see \eqref{II1}).
\item \label{wt} Weak type $(1,1)$ of $\rie$ with the weight $u=(M^\sigma w_*^{\nu s'})^{\frac{1}{\nu s'}}$, when estimating $II_{2,1}$.   
\item \label{hor} H\"ormander's like property of $\ke$ (see \eqref{hormander}) to bound $II_{2,2}$.
\item \label{estk} Estimates of the size of $\ke$ given by Lemma~\ref{lem:kcor} to obtain inequality \eqref{40}.
\end{enumerate}

When $V\in RH_d$ all these properties are true for $\rie^*_b$, $\rie^*$ and $\ke^*$ for the corresponding value $s=\infty$. In fact, that \eqref{st} and \eqref{wt} are true is a consequence of Theorem~\ref{teo:Lp} of Section~\ref{sec:lpineq} and Theorem~3 in \cite{BHS_schr-w} together with Lemma~\ref{lem:p1} above.

Regarding \eqref{hor} and \eqref{estk} it is known that $\ke^*$ is a Calder\'on-Zygmund kernel when $V\in RH_q$ and moreover it satisfies the stronger inequalities
\begin{equation*}
|\ke^*(x,y)|\ \leq\ C_N\left(1+\frac{|x-y|}{\rho(x)}\right)^{-N} \frac{1}{|x-y|^d},
\end{equation*}
and
\begin{equation*}
|\ke^*(x,y)-\ke^*(x,z)| \ \leq C_N\,\left(1+\frac{|x-y|}{\rho(x)}\right)^{-N} \frac{|y-z|^\de}{|x-y|^{d+\de}},
\end{equation*}
whenever $2|y-z|\le |x-y|$, for some $\de>0$ and any $N\ge 0$. Also, $\rho(x)$ can be substituted by $\rho(y)$ in all instances (see Lemma~4 in \cite{BHS_commut}).
\end{rem}

\def\cprime{$'$}

\end{document}